
\documentclass[a4paper,12pt,leqno]{amsart}


\usepackage{amsmath,amssymb,amsxtra,comment,graphicx,psfrag}
\usepackage{bm,mathrsfs}
\usepackage{mathtools}
\usepackage{color}

\usepackage{enumerate,enumitem}

\usepackage{hyperref}

\usepackage{pdfsync}

\usepackage{hhline}
\usepackage[margin=1.25in]{geometry}

\allowdisplaybreaks

\definecolor{blue}{rgb}{0,0.0,0.9}
\definecolor{red}{rgb}{0.8,0.0,0}

%
\textheight 23.38truecm




 at 8. truept



\def\R{{\mathbb R}}

\def\N{{\mathbb N}}

\def\i{{\rm i}}
\def\tr|{|\!|\!|}

\DeclareMathOperator\Real {Re}

\theoremstyle{plain}
\newtheorem{theorem}{Theorem}[section]
\newtheorem{lemma}[theorem]{Lemma}
\newtheorem{proposition}[theorem]{Proposition}
\newtheorem{corollary}[theorem]{Corollary}

\theoremstyle{definition}

\newtheorem{example}{Example}[section]

\newtheorem{remark}{Remark}[section]

\numberwithin{equation}{section}
\numberwithin{table}{section}
\numberwithin{figure}{section}

\begin{document}

\title[Maximum norm analysis of implicit--explicit BDF methods]
{Maximum norm analysis of implicit--explicit\\
backward difference formulas\\ for nonlinear parabolic 
equations} 

\author[Georgios Akrivis]{Georgios Akrivis$\,\,$}
\address{Department of Computer Science \& Engineering, University of Ioannina, 451$\,$10
Ioannina, Greece} 
\email {\href{mailto:akrivis@cse.uoi.gr}{akrivis{\it @\,}cse.uoi.gr}} 

\author[Buyang Li]{$\,\,$Buyang Li $\,$}
\address{Department of Applied Mathematics, 
The Hong Kong Polytechnic University, Kowloon, Hong Kong.
} 
\email {\href{mailto:buyang.li@polyu.edu.hk}{buyang.li{\it @\,}polyu.edu.hk}}




\date{\today}

\keywords{Nonlinear parabolic equations, 
implicit--explicit BDF methods, discrete maximal 
parabolic regularity, maximum norm error analysis}
\subjclass[2010]{Primary 65M12; Secondary 65L06.}

\begin{abstract} 
We establish optimal order a priori error estimates for implicit--explicit BDF methods for abstract 
semilinear parabolic equations with time-dependent operators in a complex Banach space settings, 
under a sharp condition on the non-self-adjointness of the linear operator. 
Our approach relies on the discrete maximal parabolic regularity of implicit BDF schemes 
for autonomous linear parabolic equations, recently established in \cite{KLL}, and on ideas from
\cite{ALL}. We illustrate the applicability of our results to four initial and boundary value problems,
namely two for second order, one for fractional order, and one for fourth order, namely
the  Cahn--Hilliard,  parabolic equations.
\end{abstract}
\maketitle

\section{Introduction}\label{Se:intro}
Let $V\hookrightarrow H=H'\hookrightarrow V'$ be a Gelfand triple of complex Hilbert spaces 
such that the restriction of the duality pairing $\langle\cdot,\cdot\rangle$ between 
$V$ and $V'$ to $V\times H$ coincides with the inner product 
$(\cdot,\cdot)$ on $H$. Let $T >0$ and consider an abstract initial value problem for a 
possibly nonlinear parabolic equation,
\begin{equation}
\label{ivp}
\left \{
\begin{aligned} 
&u' (t) + A(t)u(t)=B(t,u(t)), \quad 0<t<T,\\
&u(0)=u_0;
\end{aligned}
\right .
\end{equation}
here $A(t): V\to V'$ are bounded linear operators, while $B(t,\cdot) : V\cap W\to V'$ are nonlinear operators defined in the intersection of $V$ 
with another Banach space $W$. In this paper, we study the stability of the 
implicit--explicit BDF methods for the time discretization of \eqref{ivp}
and derive optimal order a priori error estimates.

Examples of the abstract problem \eqref{ivp} include (but are not restricted to) the following 
types of nonlinear parabolic partial differential equations.  

\begin{example}\label{Exmp1}
Consider the following initial and boundary value problem in a bounded domain 
$\varOmega\subset\R^d$, with smooth  boundary $\partial \varOmega$,
\begin{equation}\label{eq:Exmp1} 
\left \{
\begin{alignedat}{3} 
&\frac{\partial u}{\partial t} 
-\nabla\cdot\big((a(x,t)+\i\, b(x,t)) \nabla u \big)\\
&\qquad\,\, 
=f(u,x,t) +\nabla\cdot \bm{g}(u,x,t)\,\,\, 
&&\text{in}\,\,&&\varOmega\times(0,T), \\
&u=0 &&\text{on}\,\,&&\partial\varOmega\times(0,T), \\
&u(\cdot,0)=u_0 &&\text{in}\,\,&&\varOmega,
\end{alignedat}
\right .
\tag{I}
\end{equation}
where $a(x,t)>0$ and $b(x,t)$ are smooth real-valued functions in $\overline\varOmega\times[0,T]$, 
and the functions $f$ and $\bm{g}$ are only locally Lipschitz continuous with respect to $u$. 
For example, $f(v,x,t)=-v^3$ and $\bm{g}(v,x,t)=(e^v,\dotsc,0)$ are allowed. In this case, 
we have $V=H^1_0(\varOmega)$, $H=L^2(\varOmega)$ and $W=L^\infty(\varOmega)$. 
Then, the operators $B(t,v):=f(v,x,t) +\nabla\cdot \bm{g}(v,x,t) $ are well defined as nonlinear 
maps from $V\cap W$ to $V'$. 
\end{example}

\begin{example}\label{Exmp2}
Consider the following initial and boundary value problem in a bounded 
domain $\varOmega\subset\R^d$, with smooth boundary $\partial \varOmega$,
this time with stronger nonlinearity,  
\begin{equation}\label{eq:Exmp2} 
\left \{
\begin{alignedat}{3} 
&\frac{\partial u}{\partial t} 
-\nabla\cdot\big((a(x,t)+\i\, b(x,t)) \nabla u \big)\\
&\qquad\,\, =f(u,\nabla u,x,t) +\nabla\cdot \bm{g}(u,\nabla u,x,t)\,\,\,
&&\text{in}\,\,&&\varOmega\times(0,T), \\
&u=0 &&\text{on}\,\,&&\partial\varOmega\times(0,T), \\
&u(\cdot,0)=u_0 &&\text{in}\,\,&&\varOmega,
\end{alignedat}
\right .
\tag{II}
\end{equation}
where $a$ and $b$ are as in \eqref{eq:Exmp1}  while the functions $f$ and $\bm{g}$ are only locally Lipschitz 
continuous with respect to $u$ and  $\nabla u$. For instance, $f(v,\nabla v,x,t)=-|\nabla v|^4v $ and 
$\bm{g}(v,\nabla v,x,t)=|\nabla v|^4\nabla v$ are allowed. In this case, we have $V=H^1_0(\varOmega)$,
 $H=L^2(\varOmega)$ and $W=W^{1,\infty}(\varOmega)$. Then, the operators 
 $B(t,v):=f(v,\nabla v,x,t) +\nabla\cdot \bm{g}(v,\nabla v,x,t) $ are well defined as nonlinear 
 maps from $V\cap W$ to $V'$. 
\end{example}

\begin{example}\label{Exmp3}
Consider the Cauchy problem for a fractional partial differential equation in $\R^d$, $d\ge 1$, 
\begin{equation}\label{eq:Exmp3} 
\left \{
\begin{alignedat}{3} 
&\frac{\partial u}{\partial t} 
+(-\varDelta)^{1/2} u =f(u)  \,\,\,
&&\text{in}\,\,&& \R^d\times(0,T), \\
&u(\cdot,0)=u_0 &&\text{in}\,\,&& \R^d ,
\end{alignedat}
\right .
\tag{III}
\end{equation}
with $f$ a given smooth function of $u$  
such that $f(0)=0.$ 
For example, $f(u)=e^u-1$. In this case, we have $V=H^{\frac{1}{2}}(\R^d)$, $H=L^2(\R^d)$ 
and $W=L^2(\R^d)\cap L^{\infty}(\R^d)$. Then, the operators 
$B(t,v):=f(v) $ are well defined as nonlinear maps from  $V\cap W$ to $V'$.
\end{example}

\begin{example}\label{Exmp4}
Consider the Cauchy problem for the Cahn--Hilliard equation in $\R^d$, $d\ge 1$, 
\begin{equation}\label{eq:Exmp4} 
\left \{
\begin{alignedat}{3} 
&\frac{\partial u}{\partial t} +\varDelta^2u =\varDelta f(u) \,\,\,
&&\text{in}\,\,&&\R^d\times(0,T), \\
&u(\cdot,0)=u_0 &&\text{in}\,\,&&\R^d ,
\end{alignedat}
\right .
\tag{IV}
\end{equation}
with $f$ a given smooth function of $u$, such as $f(u)=u^3-u$; 
see \cite{CR12}. In this case, we have $V=H^2(\R^d)$,
 $H=L^2(\R^d)$ and $W=H^2(\R^d)\cap W^{2,\infty}(\R^d).$
Then, the operators $B(t,v):=\varDelta f(u) =f'(u)\varDelta u+f''(u)|\nabla u|^2 $ are well 
defined as  nonlinear maps from $V\cap W$ to $V'$. 
\end{example}

For $k=1,\dotsc,6,$ consider the implicit $k$-step BDF method $(\delta,\beta)$ and the
explicit $k$-step method $(\delta,\gamma)$ described by the polynomials $\delta, \beta$ 
and $\gamma,$ 
\begin{equation}
\label{BDF1}
\left\{
\begin{aligned}
\delta (\zeta)&{}= \sum_{\ell=1}^k \frac 1\ell  (1-\zeta)^\ell
=\sum\limits^k_{i=0}\delta_i \zeta ^{i}, \quad \beta(\zeta)= 1,\\ 
\gamma (\zeta)&{}=\frac 1\zeta\big [1- (1-\zeta)^k\big ] 
=\sum_{i=0}^{k-1} \gamma_i\zeta^i.
\end{aligned}
\right .
\end{equation}
The BDF method $(\delta,\beta)$ is known to have order $k$ and to be A$(\alpha_k)$-stable
with angles $\alpha_1=\alpha_2=90^\circ, \alpha_3=86.03^\circ, \alpha_4=73.35^\circ,
\alpha_5=51.84^\circ$ and $\alpha_6=17.84^\circ$; see \cite[Section V.2]{HW}. 
A$(\alpha)$-stability is equivalent to $|\arg \delta (\zeta)|\le \pi -\alpha$ 
for $|\zeta|\le 1.$ Note that the first- and second-order BDF methods are 
A-stable, that is $\Real \delta (\zeta)\ge 0$ for $|\zeta|\le 1.$
For a given polynomial $\delta,$ the scheme $(\delta,\gamma)$ is the unique explicit 
$k$-step  scheme of order $k;$ the order of all other explicit $k$-step schemes 
$(\delta,\tilde \gamma)$ is at most $k-1.$

Let $N\in \N, N\ge k,$ and consider  a uniform partition $t_n:=n\tau, n=0,\dotsc,N,$ of the interval  $[0,T],$ with time step $\tau:=T/N.$ 
Since the nonlinear operators $B(t,\cdot)$ on the right-hand side of \eqref{ivp} is only defined on $V\cap W$, 
we shall choose some Banach spaces 
$D\subset V\cap W$ and $X$ such that 
\begin{align}\label{DWXAB}
\begin{array}{ccccc}
V & \subset & H & \subset & V' \\
\cup &         &   &             & \cup \\
D &\subset & W  &\subset & X
\end{array}
\qquad\text{and}\qquad
\begin{array}{llllll}
A(t): &\!\!\! D\rightarrow X , \\[6pt]
B(t,\cdot): &\!\!\! D\rightarrow X , 
\end{array} 
\end{align}
and assume that  we are given starting approximations $u_0,\dotsc,u_{k-1}\in D,$ 
to the nodal values $u^\star_j:=u(t_j)$, $j=0,\dotsc,k-1.$  
We discretize \eqref{ivp} in time by the implicit--explicit $k$-step 
BDF method $(\delta,\beta,\gamma)$, i.e., we define approximations $u_m\in D$ 
to the nodal values $u^\star_m:=u(t_m)$  of the exact solution as follows
%
\begin{equation}
\label{abg}
\frac{1}{\tau}\sum\limits^k_{i=0}\delta_iu_{n-i} +  A(t_n)u_n= 
\sum\limits^{k-1}_{i=0}\gamma_iB(t_{n-i-1},u_{n-i-1}),
\quad n=k,\dotsc,N .
\end{equation}
In other words, the linear part $A(t)u(t)$ of the equation in \eqref{ivp} is discretized by the
implicit BDF scheme $(\delta,\beta),$  while the nonlinear part $B(t,u(t))$  is discretized by the
explicit BDF scheme $(\delta,\gamma).$
As a result, the unknown $u_n$ appears only on the left-hand side of the implicit--explicit
BDF scheme \eqref{abg}; 
therefore, to advance in time, one only needs to solve one linear equation, which reduces to 
a linear system if one discretizes also in space, at each time level. 

Motivated by Examples \ref{Exmp1}--\ref{Exmp4}, we only require that the nonlinear operators 
$B(t,\cdot)$ are Lipschitz continuous in a tube $T_{u,r}^D,$ 
\begin{equation}\label{tube-W}
T_{u,r}^D:=\{v\in D: \min_{0\le t\le T}\|v-u(t)\|_W\le  r\},
\end{equation}
around the solution $u$, uniformly in $t,$ where $W$ may be a suitably chosen 
$L^\infty$-based Sobolev space in practical applications, 
such as $L^\infty (\varOmega)$ or $W^{1,\infty} (\varOmega),$ depending on the type of the nonlinearity.   
The main difficulty in numerical analysis of such problems is that one has to 
prove uniform boundedness of the numerical solutions $u_n, n=k,\dotsc,N$, 
with respect to the norm of $W$. To overcome this difficulty, we study the stability of the 
implicit--explicit BDF methods for \eqref{ivp} in a Banach space setting, 
by using the mathematical tool of discrete maximal $L^p$-regularity. 
In contrast to the present approach, in \cite{A-SINUM,ACM2,AL,AK}  the local Lipschitz condition 
was imposed in tubes $T_{u,r}^V,$
\begin{equation}\label{tube-V}
T_{u,r}^V:=\{v\in V: \min_{0\le t\le T}\|v-u(t)\|_V\le  r\} ,
\end{equation}
defined in terms of the norm $\|\cdot\|_V;$ as a consequence, the analysis of \cite{ACM2,A-SINUM,AL,AK} 
is not directly applicable to Examples \ref{Exmp1}--\ref{Exmp4}, if we only 
consider the discretization  in time, since it cannot ensure that the approximations are sufficiently 
close to the exact solution in the norm $\|\cdot\|_W;$ 
it is, however, applicable, usually under mild mesh-conditions, in the fully discrete case, i.e., 
if we combine the time stepping schemes with discretization in space;  cf., e.g., \cite{ACM2}. 
The present analysis allows us to avoid  
growth conditions on the nonlinearities.

Our approach is based on the discrete  maximal parabolic regularity property of the
implicit BDF methods. Let us briefly recall the relevant definitions:
An elliptic differential operator $-A$ on a Banach space $\big (X,\|\cdot\|_X\big )$
has maximal $L^p$-regularity, $1<p<\infty,$ if the solution $u$ of the initial value problem
\begin{equation}\label{De:max-reg1}
u' (t) + Au(t)=f(t), \quad 0<t<T,\quad u(0)=0,
\end{equation}
with forcing term $f\in L^p  (0,T;X ),$ satisfies the a priori estimate
\begin{equation}\label{De:max-reg2}
\int_0^T \|u' (t)\|_X^p\, dt+\int_0^T \|Au(t)\|_X^p\, dt\le C\int_0^T \|f (t)\|_X^p\, dt
\end{equation}
with some constant $C.$ In other words, if both terms $u'$ and $Au$ on the left-hand
side of the autonomous parabolic equation are well defined and have the same 
(i.e., maximal) regularity  as the forcing term $f.$  It is well known that if an operator
has maximal $L^p$-regularity for some $1<p<\infty,$ then it has maximal $L^p$-regularity 
for all $1<p<\infty.$ Replacing $A$ by $A(t)$ both in 
\eqref{De:max-reg1} and \eqref{De:max-reg2}, the definition extends to non-autonomous 
parabolic equations with a family of elliptic differential operator $-A(t), t\in [0,T],$ on $X$ 
with the same domain, $D=D(A(t)), t\in [0,T].$ Maximal regularity is an important 
tool in the theory of nonlinear parabolic equations. 
For an excellent account of the maximal  regularity theory,
in particular, for the important Weis' characterization on UMD spaces (which include
$L^q(\varOmega), 1<q<\infty$), and for relevant references, we refer to the lecture notes by Kunstmann \& Weis \cite{KW}. 
Space-discrete analogues of the maximal parabolic regularity, uniform in the spatial mesh size, can be found in \cite{Geiss1,Geiss2,Li15,LS15}.

For the time-discrete maximal parabolic regularity property of autonomous parabolic
equations, uniformly in the timestep,  we refer to \cite{KLL} and the references therein. 
The main result of \cite{KLL} is that A-stable Runge--Kutta methods, satisfying  minor 
additional conditions, such as Gauss--Legendre and Radau IIA methods, as well as
one- and two-step BDF methods preserve maximal regularity; it is also shown in \cite{KLL}
that higher-order $k$-step BDF methods, $k=3,\dotsc,6,$ preserve maximal regularity 
under some natural additional conditions on the operator accounting for the lack of 
A-stability of these methods.

Here we establish local stability of the implicit--explicit BDF methods \eqref{abg} for  \eqref{ivp}
under smallness conditions on the \emph{stability constants} $\lambda,$ which is equal to
$1$ in the case of self-adjoint operators while $\lambda-1$ may be viewed
as a measure of the non-self-adjointness of the linear operators $A(t)$ (see \eqref{bounded-a}
in the sequel),  for $k=3,\dotsc,6,$ and  $\tilde \lambda,$ the constant in the local Lipschitz condition 
on the nonlinear operators $B(t,\cdot), t\in [0,T],$ (see  \eqref{Lipschitz} in the sequel).
While we can quantify the condition on the stability constant $\lambda,$
actually in a sharp way, unfortunately we cannot quantify the condition on $\tilde \lambda,$
since we have no control on the constant  $C$ in the discrete maximal regularity 
of the implicit  BDF schemes (see \eqref{BDF_MaxReg} in the sequel);
therefore, we shall assume that  $\tilde \lambda$ is sufficiently small,
in the case $k=3,\dotsc,6$ depending also on the value of $\lambda.$ 
In the applications, 
in case the differential operators $B(t,\cdot), t\in [0,T],$  are of order 
lower than the order of the linear differential operators $A(t),$
the Lipschitz constant $\tilde \lambda$ in the local Lipschitz condition \eqref{Lipschitz} 
can typically be chosen arbitrarily small; as we will see, this is, in particular, the case
for Examples \ref{Exmp1}, \ref{Exmp3},  and \ref{Exmp4}.

More precisely, we shall assume that $\lambda$ does not exceed $1/\cos \alpha_k,$
\begin{equation}
\label{intr-stab-abg}
\lambda<\frac{1}{\cos \alpha_k}.
\end{equation}
This  is actually a sharp condition on the non-self-adjointness of the linear operators $A(t),$ 
in the sense that if $\lambda$ exceeds the right-hand side in \eqref{intr-stab-abg}, then
the (implicit) $k$-step BDF method is in general unstable for the linear equation 
$u' (t) + A(t)u(t)=0.$ Indeed, for $k=1$ and $k=2$ condition \eqref{intr-stab-abg} is void, 
and, for $k=3,\dotsc,6,$ letting $\tilde A$ be a positive definite self-adjoint operator and 
considering the ``rotated'' operator $A:=e^{\i \varphi} \tilde A,$ we see that condition 
\eqref{bounded-a} is satisfied as an equality with $\lambda=1/\cos \varphi.$ But, the 
eigenvalues of $A$ are of the form $\rho e^{\i \varphi},$ with $\rho >0;$ now, 
for $\alpha_k<\varphi<\pi$ the eigenvalues are not included in the stability sector
$\varSigma_{\alpha_k}:=\{z=re^{\i\vartheta}: r\ge 0, |\vartheta|\le \alpha_k\}$ of the 
$k$-step BDF scheme and, therefore, according to the von 
Neumann stability criterion, the $k$-step BDF  method is not  unconditionally stable for the 
equation $u' (t) + e^{\i \varphi} \tilde A u(t)=0.$

Let us note that in the case of the linear operators of the differential equations 
in the initial and boundary value problems of Examples \ref{Exmp1} and \ref{Exmp2},
condition \eqref{intr-stab-abg} takes the form 
\begin{equation}
\label{intr-stab-abg2}
\lambda=\max_{\substack{x\in \bar \varOmega\\*[2pt] t\in [0,T]}}
\frac{|a(x,t)+\i\, b(x,t)|}{a(x,t)}<\frac{1}{\cos \alpha_k},
\end{equation}
which can also be equivalently written as
\begin{equation}
\label{intr-stab-abg3}
\max_{\substack{x\in \bar \varOmega\\*[2pt] t\in [0,T]}}
\frac{|b(x,t)|}{a(x,t)}<\tan \alpha_k.
\end{equation}

Implicit--explicit multistep methods, and in particular implicit--explicit BDF sche\-mes, 
were introduced and analyzed for non-autonomous linear parabolic equations in \cite{C}.
In a Hilbert space setting, implicit--explicit BDF methods can be analyzed by various
techniques, such as spectral and Fourier techniques 
(see, e.g., \cite{ACM2, A-MATHCOM}),
and energy methods (see, e.g., \cite{A-SINUM, AL, AK}); both techniques
have advantages and drawbacks. In a Banach space setting, the analysis
of implicit BDF methods for autonomous linear parabolic equations in \cite{KLL}
is based on maximal regularity, while the analysis
of implicit as well as of linearly implicit BDF methods for quasilinear parabolic equations 
with real symmetric coefficients in  \cite{ALL}  combines maximal regularity and energy techniques.
To our best knowledge, implicit--explicit BDF schemes 
for nonlinear parabolic equations, in particular with complex coefficients, 
have not been previously analyzed in a Banach space setting.

Error estimates under sharp stability conditions of the form \eqref{intr-stab-abg} 
were established in the Hilbert space setting by spectral and Fourier techniques in \cite{S} 
for implicit multistep methods, including BDF schemes, for linear  parabolic equations, and
in \cite{A-MATHCOM} for implicit--explicit multistep methods, including implicit--explicit
BDF schemes, for a class of nonlinear parabolic equations with linear operators of a 
special form; more precisely, for linear operators of the form considered in \eqref{eq:Exmp1} 
and \eqref{eq:Exmp2}, the assumption in  \cite{A-MATHCOM} is that $a$ is independent of $t$
and $b$ is of the form $b(x,t)=\tilde b(t) a(x).$

For further stability analyses of implicit multistep methods for autonomous 
linear parabolic equations in a Banach space setting, we refer to \cite{Pal} and 
references therein; in particular, in \cite{Pal} stability under the optimal condition  
\eqref{intr-stab-abg} is established.

An outline of the paper is as follows:  In Section \ref{Se:abstr} we present our 
abstract framework and discuss its applicability to the cases of the initial
and boundary value problems of Examples \ref{Exmp1}--\ref{Exmp4}.
In Section \ref{Sec:MaxReg} we establish discrete maximal regularity of 
BDF methods for non-autonomous linear parabolic equations, thus extending
recent results of \cite{KLL} concerning autonomous parabolic equations.
Section \ref{Sec:Angle} is devoted to the angle of analyticity and R-boundedness
of non-autonomous linear operators under assumptions \eqref{en:A1} and \eqref{en:A3};
a lower bound for this angle is given. Our main results are presented in Sections 
\ref{Se:stab} and \ref{Se:cons}: We first establish local 
stability of the implicit--explicit BDF schemes \eqref{abg} in Section \ref{Se:stab},
which is then combined with the consistency of the methods 
and leads to optimal order a priori error estimates in Section \ref{Se:cons}.
In Section \ref{Sec:ProofProposition} we verify the
applicability of our abstract framework to the initial and boundary value problems
\eqref{eq:Exmp1}--\eqref{eq:Exmp4} of Examples \ref{Exmp1}--\ref{Exmp4},
respectively, in the concrete spaces given in Propositions \ref{lem:framework}--\ref{lem:framework4}.

\section{Abstract framework and applications to Examples \ref{Exmp1}--\ref{Exmp4}}\label{Se:abstr}

In this section we present our abstract framework, which is, in particular,
applicable to Examples \ref{Exmp1}--\ref{Exmp4}.

For a sequence $(v_n)_{n=1}^N$ and a given stepsize $\tau$, we shall use the notation
\[ \big\|(v_n )_{n=1}^N\big\|_{L^p(X)} 
= \bigg( \tau \sum_{n=1}^N \| v_n \|_X^p \bigg)^{\!1/p},\]
which is the $L^p(0,N\tau;X)$ norm of the piecewise constant function taking the value~$v_n$ 
in the subinterval $(t_{n-1},t_n], n=1,\dotsc,N.$ 

For any two Banach spaces $X$ and $Y$ which are imbedded into a common Hausdorff topological 
space, we denote by $X\cap Y$ the Banach space consisting of elements in both $X$ and $Y$, 
equipped with the norm 
\[ \|v\|_{X\cap Y}:=\|v\|_X+\|v\|_Y .\]

We will work with the Banach space setting  under the following assumptions: 
\begin{enumerate}[label=(A\arabic*),ref=A\arabic*]\itemsep=5pt
\item  (\emph{Generation of bounded analytic semigroups by the linear operators})\label{en:A1} \\
For all $s\in[0,T]$, there exists a unique solution 
$v\in H^1(\R_+;V')\cap L^2(\R_+;V) \hookrightarrow C([0,\infty);H)$  of the initial value problem
\begin{equation}\label{HomoEq}
\left\{
\begin{aligned}
&v'(t)+A(s)v(t)=0,\quad t>0,\\
&v(0)=v_0 .
\end{aligned}
\right. 
\end{equation}
The solution map $E_s^H(t):H\rightarrow H$, which maps $v_0$ to $v(t)$,  
extends to a bounded analytic semigroup $\{E_s^H(z)\}_{z\in\varSigma_{\theta_s}}$ on $H$, 
where $\theta_s\in(0,\pi/2]$ is the maximal angle of analyticity (i.e., the supremum of all such angles).   
Moreover,  the domain $D(A_H(s))=D_H\hookrightarrow V$ of the generator $-A_H(s)$ of the 
semigroup  $\{E_s^H(z)\}_{z\in\varSigma_{\theta_s}}$ 
is supposed to be independent of $s\in[0,T]$ and compactly imbedded into $H$, with 
\[\theta:=\inf_{s\in[0,T]}\theta_s>0 . \]
\item (\emph{Discrete maximal regularity of the implicit BDF schemes})\label{en:A2} \\ 
There exist Banach spaces $D$ and $X$ satisfying \eqref{DWXAB}. 
Moreover, if $\theta>\pi/2-\alpha_k,$ then, for all $s\in[0,T],$ 
the $k$-step BDF solution determined by
\begin{equation}
\label{lin-eq-Av-bdf}
\frac1\tau \sum_{j=0}^k \delta_j v_{n-j} + A(s) v_n 
= f_n,\quad k\le n\le N ,
\end{equation}
with given $f_n\in X$ and given starting values $v_0,\dotsc,v_{k-1}\in D$, is bounded by
\begin{equation}\label{BDF_MaxReg}
\begin{aligned}
&\frac{1}{\tau}\big\|(v_n-v_{n-1} )_{n=k}^N\big\|_{ L^p(X)} 
+ \big\|(v_n )_{n=k}^N\big\|_{ L^p(D)} \\
&\le C\Big(
\big\|(f_n)_{n=k}^N\big\|_{ L^p(X)} 
+\frac{1}{\tau}\big\|(v_i)_{i=0}^{k-1}\big\|_{ L^p(X)} 
+ \big\|(v_i )_{i=0}^{k-1}\big\|_{ L^p(D)} \Big) , 
\end{aligned}
\end{equation}
where the constant $C$ is independent of $\tau$ and $s\in[0,T]$. 
\item (\emph{Boundedness and bounded variation of $A(t):D\rightarrow X,$} and coercivity of 
$A(t):V\rightarrow V'$)\label{en:A3} \\
There exist two positive constants $M_1$ and $M_2$ such that 
the operator norms $\|A(t)\|_{{\mathcal L}(D,X)}$ 
are uniformly bounded by $M_1,$ for all $t\in [0,T],$
and
\begin{equation}\label{bv-t}
\sum_{i=1}^m \| A(\tau_i)-A(\tau_{i-1})\|_{{\mathcal L}(D,X)} \le  M_2
\end{equation}
for any partition $0=\tau_0<\tau_1<\cdots<\tau_m=T$ of $[0,T].$ 
There exists a constant $\lambda\ge 1$ such that 
\begin{equation}
\label{bounded-a}
|\langle A(t)v,v\rangle| 
\le \lambda \, \Real \langle A(t)v,v\rangle \quad \forall \, v\in V .
\end{equation}
\item (\emph{Local Lipschitz continuity of the nonlinear operators 
$B(t,\cdot)$})\label{en:A4} \\
There exists a constant $r_0>0$ such that the operators $B(t,\cdot): D\to X$ 
satisfy a local Lipschitz condition in a tube $T_{u,r_0}^D;$ see \eqref{tube-W};
more precisely, there exist nonnegative constants $\tilde\lambda$ 
and $C_{B}$, independent of $t\in[0,T]$, such that, for all $v,w\in T_{u,r_0}^D$,
\begin{equation}
\begin{aligned}
\label{Lipschitz}
\| B(t,v)-B(t,w) \| _{X}
\le \tilde\lambda\|v-w\|_D+C_{B}(\|v\|_D+\|w\|_D)\| v-w \|_{W} .  
\end{aligned}
\end{equation}

\item  (\emph{Control of the $W$-norm by the maximal $L^p$-regularity})\label{en:A5} \\
For any $\varepsilon>0,$ there exists $C_\varepsilon>0$ such that 
\begin{equation}
\begin{aligned}
\label{Compact_interp}
\|v\|_{W}\le \varepsilon \|v\|_{D}+C_\varepsilon \|v\|_{X} \quad\forall\, v\in D .
\end{aligned}
\end{equation}
%
For some $1<p<\infty$, we have a time-space continuous imbedding 
$W^{1,p}(0,T;X)$ $ \cap L^p(0,T;D)\hookrightarrow L^\infty(0,T;W)$: 
there exists a positive constant 
$C_{W}$ such that,  for all $v \in W^{1,p}(0,T;X) \cap L^p(0,T;D)$ with $v(0)=0$,
\begin{equation}
\begin{aligned}
\label{MaxLp_interp}
\|v\|_{L^\infty(0,T;W)} \le C_{W}\big( \| v' \|_{L^p(0,T;X)} + \| v \|_{L^p(0,T;D)} \big).\end{aligned}
\end{equation}
\end{enumerate}

\begin{remark} 
{\upshape 
Similar assumptions in the Banach space setting 
\eqref{DWXAB} were recently used in \cite{ALL}. 
Assumptions \eqref{en:A1} and \eqref{en:A2} are now connected 
through $\theta$, the angle of analyticity, since we study parabolic equations with complex coefficients. 
Assumption \eqref{en:A3} is now relaxed to operators of bounded variation in time, possibly 
discontinuous.} 
\end{remark}
\begin{remark} 
{\upshape 
Assumption \eqref{en:A1} guarantees that $A_H(s)v=A(s)v$ for $v\in D_H\hookrightarrow V .$ 
In other words, $A(s)$ is an extension of the operator  $A_H(s)$. 
} 
\end{remark}

\eqref{en:A1}--\eqref{en:A5} are natural assumptions for studying many PDE problems, 
as can be seen in the following three Propositions.

\begin{proposition}[The abstract framework is 
applicable to Examples \ref{Exmp1}--\ref{Exmp2}] \label{lem:framework}
Let $q\in(d,\infty)\cap[2,\infty)$ and $p\in(1,\infty)$ be such that $2/p+d/q<1.$
Then, assumptions \eqref{en:A1}--\eqref{en:A5} are satisfied for the initial and boundary value problem \eqref{eq:Exmp1} in Example \ref{Exmp1}
with $D_H=H^2(\varOmega)\cap
 H^1_0(\varOmega),  D=W^{1,q}_0(\varOmega),  W=L^\infty(\varOmega)$ and 
 $X=W^{-1,q}(\varOmega)$, with $\lambda$ as in the left-hand side of \eqref{intr-stab-abg2} 
 and $\tilde\lambda=0$. 


\indent 
Let $q\in(d,\infty)$ and $p\in(1,\infty)$ be such that $2/p+d/q<1.$
Then,  assumptions \eqref{en:A1}--\eqref{en:A5} are satisfied for the initial and 
boundary value problem \eqref{eq:Exmp2} in Example \ref{Exmp2}
with $D_H=H^2(\varOmega)\cap H^1_0(\varOmega), 
D=W^{2,q}(\varOmega)\cap W^{1,q}_0(\varOmega), 
W=W^{1,\infty}(\varOmega)$ and $X=L^q(\varOmega)$, with $\lambda$ as 
in the left-hand side of \eqref{intr-stab-abg2} and
\begin{equation}
\label{tildelambda-Exam3}
\tilde\lambda=\sup_{t\in[0,T]}\sum_{i,j=1}^d\sup_{x\in\varOmega}
\sup_{\substack{|\xi-u(x,t)|\le r\\*[2pt] |\vec\eta-\nabla u(x,t)|\le r}}
\bigg| \frac{\partial g_i(\xi,\vec\eta,x,t)}{\partial\eta_j} \bigg| \, .
\end{equation}
\end{proposition}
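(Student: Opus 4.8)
The plan is to verify assumptions \eqref{en:A1}--\eqref{en:A5} one at a time for the two initial and boundary value problems, since each is largely a matter of invoking known elliptic and parabolic regularity theory for the (complex) second-order operator $A(t)v=-\nabla\cdot\big((a+\i b)\nabla v\big)$. First I would address \eqref{en:A1}: the sesquilinear form $a_t(v,w)=\int_\varOmega (a+\i b)\nabla v\cdot\overline{\nabla w}$ is bounded and coercive on $V=H^1_0(\varOmega)$ with $\Real a_t(v,v)=\int_\varOmega a|\nabla v|^2\ge c\|v\|_V^2$, so $-A_H(s)$ generates an analytic semigroup on $H=L^2(\varOmega)$; the angle of analyticity is bounded below uniformly in $s$ because the numerical range of $A(s)$ lies in a fixed sector $|\arg z|\le\arctan\big(\sup|b|/\inf a\big)<\pi/2$. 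Elliptic regularity for the operator with smooth coefficients gives $D_H=D(A_H(s))=H^2(\varOmega)\cap H^1_0(\varOmega)$, independent of $s$ and compactly imbedded in $H$ by Rellich.

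Next, for \eqref{en:A3} the coercivity \eqref{bounded-a} holds with $\lambda$ equal to the left-hand side of \eqref{intr-stab-abg2}: indeed $|a_t(v,v)|\le\max_x\frac{|a+\i b|}{a}\cdot\int_\varOmega a|\nabla v|^2=\lambda\,\Real a_t(v,v)$. Boundedness of $A(t):D\to X$ and the bounded-variation estimate \eqref{bv-t} follow from the smoothness of $a,b$ in $t$ (so $\|A(t)-A(s)\|_{\mathcal L(D,X)}\le C|t-s|$, which telescopes to $CT$). For \eqref{en:A2} I would appeal to the non-autonomous discrete maximal regularity result established earlier in the paper (Section \ref{Sec:MaxReg}), which requires only \eqref{en:A1}, \eqref{en:A3}, and that $-A(s)$ has maximal $L^p$-regularity on $X$; the latter holds because $-\nabla\cdot\big((a+\i b)\nabla\cdot\big)$ with smooth coefficients has maximal $L^q$-regularity on $L^q(\varOmega)$ (hence on $W^{-1,q}$ for Example~\ref{Exmp1} and on $L^q$ for Example~\ref{Exmp2}), as it generates a bounded analytic semigroup with a bounded $H^\infty$-calculus on the UMD space $L^q$. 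The condition $\theta>\pi/2-\alpha_k$ is the hypothesis under which \eqref{BDF_MaxReg} is asserted, so nothing further is needed here.

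For \eqref{en:A4}, in Example~\ref{Exmp1} we have $B(t,v)=f(v,x,t)+\nabla\cdot\bm g(v,x,t)$ mapping $D=W^{1,q}_0(\varOmega)$ into $X=W^{-1,q}(\varOmega)$; since $W=L^\infty(\varOmega)$ controls the difference and $f,\bm g$ are locally Lipschitz, on the tube $T_{u,r_0}^D$ one estimates $\|f(v)-f(w)\|_{W^{-1,q}}\le\|f(v)-f(w)\|_{L^q}\le L_f\|v-w\|_{L^\infty}$ and $\|\nabla\cdot(\bm g(v)-\bm g(w))\|_{W^{-1,q}}\le\|\bm g(v)-\bm g(w)\|_{L^q}\le L_g\|v-w\|_{L^\infty}$, where $L_f,L_g$ depend only on the $L^\infty$-bound of $v,w$ on the tube; this gives \eqref{Lipschitz} with $\tilde\lambda=0$ and $C_B$ absorbed appropriately (using $\|v\|_D+\|w\|_D\ge$ a constant, or rewriting the bound purely as $C\|v-w\|_W$). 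In Example~\ref{Exmp2}, $B(t,v)=f(v,\nabla v,x,t)+\nabla\cdot\bm g(v,\nabla v,x,t)$ maps $D=W^{2,q}\cap W^{1,q}_0$ into $X=L^q$; differentiating $\nabla\cdot\bm g$ produces the top-order term $\sum_{i,j}\partial g_i/\partial\eta_j\,\partial^2 v/\partial x_i\partial x_j$, whose difference quotient in the two arguments yields precisely the constant $\tilde\lambda$ in \eqref{tildelambda-Exam3} multiplying $\|v-w\|_{W^{2,q}}\le\|v-w\|_D$, while the remaining lower-order terms (the chain-rule pieces involving $\nabla v$, and $f$ itself) are Lipschitz from $W^{1,\infty}$ into $L^q$ with a constant of the form $C_B(\|v\|_D+\|w\|_D)$ after using $\|\nabla v\|_{L^{2q}}\le C\|v\|_{W^{2,q}}^{1/2}\|v\|_{L^\infty}^{1/2}$ or Hölder. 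Finally, \eqref{en:A5}: \eqref{Compact_interp} is a Gagliardo--Nirenberg interpolation inequality ($\|v\|_{L^\infty}\le\varepsilon\|v\|_{W^{1,q}}+C_\varepsilon\|v\|_{W^{-1,q}}$, resp. $\|v\|_{W^{1,\infty}}\le\varepsilon\|v\|_{W^{2,q}}+C_\varepsilon\|v\|_{L^q}$, valid since $q>d$), and \eqref{MaxLp_interp} is the standard trace/maximal-regularity imbedding $W^{1,p}(0,T;X)\cap L^p(0,T;D)\hookrightarrow C([0,T];(X,D)_{1-1/p,p})$ combined with $(X,D)_{1-1/p,p}\hookrightarrow W$, which holds under the assumed $2/p+d/q<1$ because the real-interpolation space between $W^{-1,q}$ and $W^{1,q}$ (resp.\ $L^q$ and $W^{2,q}$) with exponent $1-1/p$ imbeds into $L^\infty$ (resp.\ $W^{1,\infty}$) exactly when $2/p+d/q<1$.

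The main obstacle I anticipate is \eqref{en:A4} for Example~\ref{Exmp2}: isolating the genuinely top-order contribution of $\nabla\cdot\bm g(v,\nabla v)$ so that its Lipschitz constant is exactly \eqref{tildelambda-Exam3} (and not something larger), while showing every other term can be bundled into the $C_B(\|v\|_D+\|w\|_D)\|v-w\|_W$ form, requires a careful product-rule expansion and repeated use of $W^{1,q}\hookrightarrow L^\infty$ together with an interpolation estimate for $\|\nabla v\|_{L^{2q}}$ in terms of $\|v\|_D$ and $\|v\|_W$; the bookkeeping of which factors land in $X=L^q$ versus which are controlled in $W=W^{1,\infty}$ is the delicate point. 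Everything else reduces to citing standard results: analytic semigroup generation and elliptic regularity for \eqref{en:A1}, the paper's own Section~\ref{Sec:MaxReg} together with UMD/$H^\infty$-calculus for \eqref{en:A2}, elementary form estimates for \eqref{en:A3}, and Gagliardo--Nirenberg plus trace theorems for \eqref{en:A5}.
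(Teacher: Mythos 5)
Your overall plan --- verifying \eqref{en:A1}--\eqref{en:A5} one by one via form coercivity, elliptic regularity, local Lipschitz estimates and Gagliardo--Nirenberg/trace imbeddings --- is the same as the paper's, and your treatments of \eqref{en:A1}, \eqref{en:A3}, \eqref{en:A4} and \eqref{en:A5} match what the paper does (the paper is in fact terser on \eqref{en:A4} for Example \ref{Exmp2} than you are, and it simply cites \cite[Section 9]{ALL} for \eqref{en:A5}). The problem is your verification of \eqref{en:A2}.

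First, you cannot ``appeal to the non-autonomous discrete maximal regularity result established earlier in the paper (Section \ref{Sec:MaxReg})'' to prove \eqref{en:A2}: Proposition \ref{Prop:MaxLp} takes \eqref{en:A2} as a \emph{hypothesis}. What must be verified is precisely the autonomous (frozen-coefficient) discrete maximal regularity \eqref{BDF_MaxReg} for each $A(s)$, uniformly in $s$, and this is the input to Section \ref{Sec:MaxReg}, not its output. Second, your claim that ``nothing further is needed'' beyond $\theta>\pi/2-\alpha_k$ skips the substantive step. For $k=3,\dotsc,6$ the results of \cite{KLL} require $R$-boundedness of $\{z(z+A_X(s))^{-1}\}$ on the space $X$ in a sector strictly larger than a half-plane, with half-angle governed by $\alpha_k$; generation of a bounded analytic semigroup on $L^2$ with numerical range in a sector says nothing directly about the $R$-angle on $L^q$, and ``bounded $H^\infty$-calculus on a UMD space'' does not by itself give the needed angle. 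The paper bridges this gap by invoking the Gaussian upper bounds of Auscher--McIntosh--Tchamitchian \cite[Theorem 4.19]{AIT98} for the kernels of the semigroups generated by the rotated operators $-e^{\i\varphi}A_H(s)$, $\varphi\in(0,\theta)$, and then \cite[Theorems 8.5--8.6]{KW} and \cite[Theorem 4.2]{Weis2} to obtain $R$-boundedness on $L^q(\varOmega)$ in $\varSigma_\varphi$ for every $\varphi<\theta$, uniformly in $s$; only then does \cite[Theorems 4.1--4.2]{KLL} apply. Third, for Example \ref{Exmp1} the space is $X=W^{-1,q}(\varOmega)$, and your ``hence on $W^{-1,q}$'' hides a genuine step: the paper transfers the $L^q$ estimate \eqref{MaxRegLq} to $W^{-1,q}$ using the boundedness of the Riesz transform $\nabla A_X(s)^{-1/2}$ on $L^q$ from \cite[Theorem 1]{AQ02}, following \cite[Proposition 8.7]{ALL}. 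There is also a small technical lemma you omit (and the paper supplies): converting control of $\tau^{-1}\sum_j\delta_jv_{n-j}$ into control of $\tau^{-1}(v_n-v_{n-1})$ via the factorization $\delta(\zeta)=(1-\zeta)\mu(\zeta)$ and the exponential decay of the coefficients of $1/\mu$.
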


\begin{proposition}[The abstract framework is 
applicable to Example \ref{Exmp3}] \label{lem:framework3} 
Let $q\in(d,\infty)\cap [2,\infty)$ and $p\in(1,\infty)$ be such that $1/p+d/q<1.$
 Then, assumptions \eqref{en:A1}--\eqref{en:A5} are satisfied for the Cauchy problem 
\eqref{eq:Exmp3} in Example \ref{Exmp3}
with  $D_H=H^1({\mathbb R}^d), D=H^1({\mathbb R}^d)\cap W^{1,q}({\mathbb R}^d)$, $X=L^2({\mathbb R}^d)\cap L^q({\mathbb R}^d)$ 
and $W=L^2({\mathbb R}^d)\cap L^{\infty}({\mathbb R}^d)\hookrightarrow X$, 
with the stability constants $\lambda=1$ and  $\tilde\lambda=0$. 
\end{proposition}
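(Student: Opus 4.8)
The plan is to verify, in the concrete spaces $D_H=H^1(\mathbb R^d)$, $D=H^1(\mathbb R^d)\cap W^{1,q}(\mathbb R^d)$, $X=L^2(\mathbb R^d)\cap L^q(\mathbb R^d)$ and $W=L^2(\mathbb R^d)\cap L^\infty(\mathbb R^d)$, each of the five assumptions \eqref{en:A1}--\eqref{en:A5} for the operator $A=(-\varDelta)^{1/2}$ and the nonlinearity $B(v)=f(v)$. Since $A$ is self-adjoint and positive on $H=L^2(\mathbb R^d)$, the stability constant in \eqref{bounded-a} is $\lambda=1$, and since $A$ is time-independent the bounded-variation condition \eqref{bv-t} is trivially satisfied with $M_2=0$; these observations dispose of most of \eqref{en:A3} immediately. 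First I would establish \eqref{en:A1}: the operator $(-\varDelta)^{1/2}$ generates a bounded analytic semigroup of angle $\pi/2$ on $L^2(\mathbb R^d)$ (it is self-adjoint and nonnegative, so the angle is maximal), hence $\theta=\pi/2>\pi/2-\alpha_k$ for every $k$, which also switches on the hypothesis needed in \eqref{en:A2}; the domain $D_H=D(A_H)$ with the graph norm is $H^1(\mathbb R^d)$, independent of $s$. The compact embedding $H^1(\mathbb R^d)\hookrightarrow L^2(\mathbb R^d)$ fails on the whole space, so here I would either invoke the version of the abstract theory that does not require compactness, or (more likely) note that the relevant statements are used only through maximal regularity and the interpolation inequalities, and replace ``compactly imbedded'' by ``continuously imbedded'' — this is a point to flag and handle carefully, perhaps by a localization argument or by appealing to the form the abstract results actually take.

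Next I would treat \eqref{en:A2}, the discrete maximal $L^p$-regularity of the BDF schemes for $v\mapsto (-\varDelta)^{1/2}v$ on $X=L^2\cap L^q$. On each of $L^2(\mathbb R^d)$ and $L^q(\mathbb R^d)$, $1<q<\infty$, the operator $(-\varDelta)^{1/2}$ has maximal $L^p$-regularity (these are UMD spaces and $-(-\varDelta)^{1/2}$ has a bounded $H^\infty$-calculus of angle $0$; equivalently its semigroup is analytic of angle $\pi/2$ and the $R$-boundedness of the resolvent follows from Fourier-multiplier / Mikhlin-type estimates). The continuous discrete analogue, uniform in $\tau$, is exactly the result quoted from \cite{KLL} once $\theta>\pi/2-\alpha_k$, which we have checked. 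Since the estimate \eqref{BDF_MaxReg} holds separately on $L^2$ and on $L^q$ with constants independent of $\tau$, adding the two inequalities gives the estimate on the intersection space $X$, with $D$-norm being the corresponding intersection $H^1\cap W^{1,q}$ (note $D(A_X)$ on $L^q$ is $W^{1,q}$ because $(-\varDelta)^{1/2}:W^{1,q}\to L^q$ is an isomorphism, and on $L^2$ it is $H^1$). I would spell out that the domain characterizations and the boundedness $A:D\to X$ in \eqref{en:A3} with $M_1$ finite follow from the same multiplier estimates.

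Then \eqref{en:A4}: with $B(v)=f(v)$ and $f$ smooth, $f(0)=0$, write $f(v)-f(w)=\big(\int_0^1 f'(w+s(v-w))\,ds\big)(v-w)$. On the tube $T^D_{u,r_0}$ the arguments $v,w$ are bounded in $W^{1,q}(\mathbb R^d)$ hence, since $q>d$, bounded in $L^\infty$ by Sobolev embedding; consequently $f'$ evaluated along the segment is bounded in $L^\infty$ by a constant depending only on $r_0$ and $\|u\|_{L^\infty(0,T;W)}$. This gives $\|f(v)-f(w)\|_{L^2}\le C\|v-w\|_{L^2}$ and, for the $L^q$ part, $\|f(v)-f(w)\|_{L^q}\le C\|v-w\|_{L^q}$, hence the Lipschitz estimate in the $X$-norm — but in fact, because $B$ is an operator of order $0$ while $A$ has order $1$, one can do better: bounding $\|v-w\|_{L^2\cap L^q}$ by the $W$-norm plus a controllably small multiple of the $D$-norm via \eqref{en:A5}, one obtains \eqref{Lipschitz} with $\tilde\lambda=0$ (all the dependence thrown onto the $\|v-w\|_W$ term), as claimed in the proposition; here one must be a little careful that $f(u)\in C([0,T];X)$ so that the tube is nonempty and $u$ itself is an admissible base point. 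Finally, for \eqref{en:A5}: the inequality $\|v\|_W\le\varepsilon\|v\|_D+C_\varepsilon\|v\|_X$ is a Gagliardo--Nirenberg interpolation — $\|v\|_{L^\infty}\lesssim\|v\|_{W^{1,q}}^{\kappa}\|v\|_{L^q}^{1-\kappa}$ with $\kappa=d/q\in(0,1)$, combined with Young's inequality, plus the trivial $\|v\|_{L^2}\le\|v\|_X$ — and the time-space embedding $W^{1,p}(0,T;X)\cap L^p(0,T;D)\hookrightarrow L^\infty(0,T;W)$ follows from the corresponding real-interpolation trace theorem $ (D,X)_{1-1/p,p}\hookrightarrow W$, which again reduces to the same Gagliardo--Nirenberg-type estimate once one identifies the interpolation space between $W^{1,q}$ and $L^q$ (respectively $H^1$ and $L^2$) with a Besov/Sobolev space that embeds in $L^\infty$ exactly when $1/p+d/q<1$. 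The main obstacle I anticipate is not any single one of these steps individually — they are all standard harmonic analysis on $\mathbb R^d$ — but rather the bookkeeping of intersection spaces and, above all, reconciling the whole-space setting with the compact-embedding clause in \eqref{en:A1}; I would address the latter head-on, either by weakening that hypothesis (checking it is used only qualitatively, e.g. to guarantee solvability, and can be replaced by the continuous embedding here) or by noting it plays no role in the parts of the abstract theory invoked for the error analysis.
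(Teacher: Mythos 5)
Your proposal is correct and follows the same overall outline as the paper's proof (self-adjointness gives \eqref{en:A1} with angle $\pi/2$ and \eqref{en:A3} with $\lambda=1$; the $L^2$--$L^\infty$ interpolation of $L^q$ gives \eqref{en:A4} with $\tilde\lambda=0$; Gagliardo--Nirenberg plus the trace/real-interpolation embedding $(X,D)_{1-1/p,p}\hookrightarrow W$, identified with a Besov space embedding into $L^\infty$ under $1/p+d/q<1$, gives \eqref{en:A5}; and discrete maximal regularity is obtained separately on $L^2$ and on $L^q$ and then added to cover the intersection space $X$). The one genuinely different step is how you obtain the $R$-boundedness needed to invoke \cite{KLL} for \eqref{en:A2}: you argue via the bounded $H^\infty$-calculus of $(-\varDelta)^{1/2}$ on the UMD space $L^q(\R^d)$ and Mikhlin-type multiplier estimates, whereas the paper uses the explicit Poisson kernel $E(z,x,y)=c_d z^{-d}(1+|x-y|^2/z^2)^{-(d+1)/2}$ and the weighted-norm-estimate criterion of Blunck--Kunstmann \cite[Proposition 2.9 and Corollary 2.7]{BK02}. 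Both routes are valid here; the multiplier route is shorter and more self-contained for constant-coefficient operators on $\R^d$, while the kernel-bound route is the one that generalizes to the variable-coefficient operators of Examples \ref{Exmp1}--\ref{Exmp2} and to the bi-Laplacian of Example \ref{Exmp4}, which is presumably why the authors use it uniformly. You also correctly flag that the compact imbedding $D_H\hookrightarrow\hookrightarrow H$ required in \eqref{en:A1} fails for $H^1(\R^d)\hookrightarrow L^2(\R^d)$; the paper passes over this silently. Your proposed fix is the right one: the compactness is used only in Lemma \ref{LemmaAngle} to make $z+A_H(s)$ Fredholm of index zero, and that lemma is superfluous for this example since the angle of analyticity is obtained directly from self-adjointness and nonnegativity. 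Your remark on \eqref{en:A4} that one could route part of the Lipschitz bound through the $D$-norm is an unnecessary detour: since $\|v-w\|_{L^q}\le\|v-w\|_{L^2}^{2/q}\|v-w\|_{L^\infty}^{1-2/q}\le\|v-w\|_W$ for $q\ge2$, the full Lipschitz estimate lands on the $W$-norm term directly, exactly as the paper does.
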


\begin{proposition} [The abstract framework is applicable to Example \ref{Exmp4}] \label{lem:framework4}
Let $q\in(d/2,\infty)\cap [2,\infty)$ and $p\in(1,\infty)$ be such that  $4/p+d/q<2.$
Then, assumptions \eqref{en:A1}--\eqref{en:A5} are satisfied for the Cauchy problem 
\eqref{eq:Exmp4} in Example \ref{Exmp4}
with  $D_H=H^4({\mathbb R}^d), D=H^4({\mathbb R}^d)\cap W^{4,q}({\mathbb R}^d)$, 
$X=L^2({\mathbb R}^d)\cap L^q({\mathbb R}^d)$ and 
$W=H^2({\mathbb R}^d)\cap W^{2,\infty}({\mathbb R}^d)\hookrightarrow X$, 
with the stability constants $\lambda=1$ and  $\tilde\lambda=0$. 
\end{proposition}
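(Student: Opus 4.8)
The plan is to verify assumptions \eqref{en:A1}--\eqref{en:A5} one by one for the concrete spaces listed. Note first that $q>d/2$ yields the embeddings $D=H^4(\R^d)\cap W^{4,q}(\R^d)\hookrightarrow H^2(\R^d)\cap W^{2,\infty}(\R^d)=W\hookrightarrow L^2(\R^d)\cap L^q(\R^d)=X$ underlying \eqref{DWXAB}, and that the linear operator is the constant-in-time, nonnegative, self-adjoint operator $A=\varDelta^2$ on $H=L^2(\R^d)$. After the harmless resolvent shift $\varDelta^2\rightsquigarrow\varDelta^2+\mathrm{id}$ (equivalently, passing to an exponentially weighted setting; this merely adds the bounded term $v$ to $B(t,v)$, which stays in $X$ since $D\hookrightarrow X$), we may assume $A$ is $H^2$-coercive, which is what \eqref{en:A1} needs on $\R^d$ and changes neither the domain $H^4(\R^d)$, nor the angle of analyticity, nor the constant in \eqref{bounded-a}. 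Assumption \eqref{en:A1} is then clear: $\varDelta^2$ is nonnegative self-adjoint on $L^2(\R^d)$ with domain $D_H=H^4(\R^d)\hookrightarrow V$, so $-\varDelta^2$ generates a bounded analytic semigroup of angle $\theta_s=\pi/2$ for every $s$, whence $\theta=\pi/2>\pi/2-\alpha_k$ for all $k=1,\dotsc,6$, and the domain is trivially $s$-independent. (The compactness of $D_H\hookrightarrow H$ required in \eqref{en:A1} is unavailable on $\R^d$; but since $A$ does not depend on $t$, assumption \eqref{en:A2} below follows directly from the autonomous theory of \cite{KLL}, bypassing the non-autonomous machinery of Section~\ref{Sec:MaxReg}, so this is immaterial here.) Assumption \eqref{en:A3} is equally direct: the bounded-variation sum \eqref{bv-t} vanishes since $A$ is constant; $\|\varDelta^2v\|_{L^2}\le\|v\|_{H^4}$ and $\|\varDelta^2v\|_{L^q}\le C\|v\|_{W^{4,q}}$ give boundedness $D\to X$; and $\langle\varDelta^2v,v\rangle=\|\varDelta v\|_{L^2}^2\ge0$ is real, so \eqref{bounded-a} holds with equality for $\lambda=1$, and \eqref{intr-stab-abg} is automatic for every $k$.

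For \eqref{en:A2}, the operator $\varDelta^2$---the Fourier multiplier with symbol $|\xi|^4$---has a bounded $H^\infty$-calculus of angle $0$ on $L^q(\R^d)$ for every $1<q<\infty$ and on $L^2(\R^d)$, and these calculi are consistent, hence $\varDelta^2$ has a bounded $H^\infty$-calculus of angle $0$ on the intersection $X=L^2(\R^d)\cap L^q(\R^d)$ as well, with domain $H^4\cap W^{4,q}=D$ (by elliptic regularity on each factor). Since $X$ has property~$(\alpha)$, this calculus is $R$-bounded, so $-\varDelta^2$ is $R$-sectorial on $X$ and has maximal $L^p$-regularity there; and because the angle of analyticity equals $\pi/2$, the condition $\theta>\pi/2-\alpha_k$ holds, so the discrete maximal regularity estimate \eqref{BDF_MaxReg}, uniformly in $\tau$, follows from the corresponding result for $k$-step BDF schemes in \cite{KLL}. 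As time-discrete maximal $L^p$-regularity for one exponent implies it for all $p\in(1,\infty)$, the exponent in \eqref{en:A2} is free; it will be pinned down in \eqref{en:A5}.

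For the nonlinear estimate \eqref{en:A4}, expand by the chain rule
\[
B(t,v)-B(t,w)=\varDelta\big(f(v)-f(w)\big)
=f'(v)\,\varDelta(v-w)+\big(f'(v)-f'(w)\big)\varDelta w+R(v,w),
\]
where $R(v,w)$ gathers the first-order contributions $f''(v)(\nabla v+\nabla w)\cdot\nabla(v-w)$ and $\big(f''(v)-f''(w)\big)|\nabla w|^2$ (conjugations as appropriate). For $v,w\in T_{u,r_0}^D$ they are bounded in $W=H^2\cap W^{2,\infty}$, hence $v,w,\nabla v,\nabla w$ are bounded in $L^\infty$; using $f\in C^\infty$, Hölder's inequality, the bound $\|\varDelta(v-w)\|_{L^2\cap L^q}\le C\|v-w\|_W$ (for the $L^q$ part by interpolation between $L^2$ and $L^\infty$, legitimate since $q\ge2$), and $\|\varDelta w\|_{L^q}\le\|w\|_{W^{4,q}}\le\|w\|_D$, one obtains
\[
\|B(t,v)-B(t,w)\|_X\le C\big(1+\|v\|_D+\|w\|_D\big)\|v-w\|_W ,
\]
with $C$ depending only on $d,q,f$ and the tube. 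The decisive point is that \emph{no} $\|v-w\|_D$-term appears---the second-order operator $\varDelta$ consumes exactly the two orders of Sobolev regularity separating $D$ from $W$---so \eqref{Lipschitz} holds with $\tilde\lambda=0$; the bare constant is absorbed into $C_B(\|v\|_D+\|w\|_D)$ using $\|v\|_D\ge\|v\|_{H^2}\ge\min_{0\le t\le T}\|u(t)\|_{H^2}-r_0>0$ on the tube of a nontrivial solution, and the extra $v$ coming from the shift contributes only a benign $\|v-w\|_X\le\|v-w\|_W$.

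It remains to check \eqref{en:A5}. The first estimate \eqref{Compact_interp} is a Gagliardo--Nirenberg/Ehrling-type inequality: $\|v\|_{H^2}\le C\|v\|_{H^4}^{1/2}\|v\|_{L^2}^{1/2}$, and, since $q>d/2$ forces $4-d/q>2$ with room to spare, $\|v\|_{W^{2,\infty}}\le C\|v\|_{W^{4,q}}^{\vartheta}\|v\|_{L^q}^{1-\vartheta}$ for some $\vartheta<1$; Young's inequality then gives \eqref{Compact_interp}. For \eqref{MaxLp_interp} one invokes the mixed-derivative (trace) theorem, $W^{1,p}(0,T;X)\cap L^p(0,T;D)\hookrightarrow C([0,T];(X,D)_{1-1/p,p})$, together with
\[
(X,D)_{1-1/p,p}\hookrightarrow(L^2,H^4)_{1-1/p,p}\cap(L^q,W^{4,q})_{1-1/p,p}
=B^{4-4/p}_{2,p}(\R^d)\cap B^{4-4/p}_{q,p}(\R^d).
\]
The hypothesis $4/p+d/q<2$ is precisely the condition making $B^{4-4/p}_{q,p}(\R^d)\hookrightarrow C^2_b(\R^d)\hookrightarrow W^{2,\infty}(\R^d)$ (Besov--Sobolev embedding, which needs $4-4/p-d/q>2$), while its consequence $4/p<2$ gives $B^{4-4/p}_{2,p}\hookrightarrow H^2$; hence $(X,D)_{1-1/p,p}\hookrightarrow W$, i.e.\ \eqref{MaxLp_interp}. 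Since $q>d/2$ guarantees the existence of some $p\in(1,\infty)$ with $4/p+d/q<2$, all of \eqref{en:A1}--\eqref{en:A5} then hold for this common $p$. I expect the two parts of \eqref{en:A5}---in particular identifying the trace space $(X,D)_{1-1/p,p}$ with an intersection of Besov spaces and reading off the sharp condition $4/p+d/q<2$---to be the main technical point; verifying \eqref{en:A2} on the intersection $X=L^2\cap L^q$ also requires some care but rests on known results, whereas \eqref{en:A1}, \eqref{en:A3} and \eqref{en:A4} are comparatively routine given the self-adjointness and the order gap between $B$ and $A$.
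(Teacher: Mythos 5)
Your proposal is correct and verifies all five assumptions, but it diverges from the paper's proof in the two technically substantive places. For \eqref{en:A2}, the paper does not invoke the $H^\infty$-calculus/property-$(\alpha)$ route at all: it quotes the Gaussian kernel bound for the semigroup generated by $-e^{\i\varphi}\varDelta^2$ from Hieber--Pr\"uss and feeds it into the Blunck--Kunstmann weighted norm estimates to obtain $R$-boundedness of the semigroup on $L^q(\R^d)$ in every sector $\varSigma_\varphi$, $\varphi<\pi/2$; it then applies the KLL theorems on $L^2$ and on $L^q$ \emph{separately} and simply adds the two estimates, which gives \eqref{BDF_MaxReg} on $X=L^2\cap L^q$ without ever discussing the functional calculus on the intersection space. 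Your route is equally valid (for the constant-coefficient multiplier $|\xi|^4$ both give $R$-sectoriality of angle $0$), and your remark that $R$-boundedness on each factor passes to the intersection is the right way to justify working on $X$ directly; the paper's ``sum the two inequalities'' trick avoids even that. For \eqref{MaxLp_interp} the paper interpolates in time--space (complex interpolation into $W^{1-\theta,p}(0,T;W^{4\theta,q})$, then Sobolev embedding in $t$ and $x$, with the existence of an admissible $\theta$ equivalent to $4/p+d/q<2$), whereas you use the trace-space characterization $(X,D)_{1-1/p,p}$ and Besov embeddings; both yield the identical condition, and your version is in fact the one the paper uses for Proposition \ref{lem:framework3}. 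Two further remarks: your observation that the compact embedding $D_H\hookrightarrow\hookrightarrow H$ demanded in \eqref{en:A1} fails on $\R^d$ is accurate and is silently glossed over in the paper -- your explanation that it is harmless here because the operator is autonomous (so \eqref{en:A2} comes straight from the autonomous KLL result and the Fredholm argument of Section \ref{Sec:Angle} is not needed) is the correct repair. The resolvent shift $\varDelta^2\rightsquigarrow\varDelta^2+\mathrm{id}$ and the subsequent gymnastics to absorb the bare constant into $C_B(\|v\|_D+\|w\|_D)$ are unnecessary: the paper's direct estimate $\|\varDelta f(w)-\varDelta f(v)\|_X\le C_K\|w-v\|_W$ on the tube already has the form \eqref{Lipschitz} with $\tilde\lambda=0$ for the purposes of the stability proof, since only the resulting bound on $\|b_\ell\|_X$ is used.
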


The proofs of Propositions \ref{lem:framework}--\ref{lem:framework4} will be given in Section 
\ref{Sec:ProofProposition}; the proofs of Propositions \ref{lem:framework3} and \ref{lem:framework4} 
are based on \cite[Corollary 2.7 and Proposition 2.9]{BK02} and \cite[Example 3.2 (A)]{HP97}, 
respectively.

In Sections \ref{Sec:MaxReg}--\ref{Se:cons}, we study the stability of the 
implicit--explicit BDF methods \eqref{abg} under assumptions \eqref{en:A1}--\eqref{en:A5}. 

\begin{remark}[Condition \eqref{bounded-a} expressed in terms of time-dependent 
norms]\label{Re:time-dep-n}
{\upshape
Following \cite{A-SINUM,AL}, we introduce time-dependent norms and rewrite
\eqref{bounded-a} in an equivalent way.
The time-dependent norms are based on the decomposition of the coercive operators 
$A(t)$ in their self-adjoint and  anti-self-adjoint parts $A_s(t)$ and $A_a(t),$ 
respectively,
\[A_s(t):=\frac 12 \big [A(t)+A(t)^\star\big ],\quad A_a(t):=
\frac 12 \big [A(t)-A(t)^\star\big ].\]
Now, we endow $V$ with the time-dependent norms $\|\cdot\|_t,$ 
\[\|v\|_t:=\langle A_s(t)v,v\rangle^{1/2} \quad \forall v\in V,\]
which are uniformly equivalent to the norm $\|\cdot\|_V,$
and denote by $\|\cdot\|_{\star,t}$ the corresponding dual norm on $V',$
\[\forall v\in V'\quad \|v\|_{\star,t}:=\sup_{w\in V\setminus\{0\}}\frac {|\langle v,w\rangle|}{\|w\|_t}=
\sup_{\substack{w\in V\\ \|w\|_t=1}}|\langle v,w\rangle|.\]
Then, condition \eqref{bounded-a} simple says that  the operators $A(t) : V\to V'$ 
are uniformly bounded and their norms do not exceed $\lambda,$
\begin{equation}
\label{bounded-a-new}
\| A(t)v\|_{\star,t} \le \lambda \| v\|_t \quad \forall v\in V.
\end{equation}
In the case of self-adjoint operators $A(t),$ \eqref{bounded-a} and \eqref{bounded-a-new}
are satisfied with $\lambda=1;$ otherwise $\lambda>1.$
}
\end{remark}

\begin{remark}[Equivalent form of  \eqref{BDF_MaxReg} in the case $v_0=0$]\label{Remark:A2}
{\upshape
If $v_0=0$, then \eqref{BDF_MaxReg} in \eqref{en:A2} can be equivalently written in a more 
symmetric form as 
\begin{equation}\label{BDF_MaxReg_v0}
\begin{aligned}
&\frac{1}{\tau}\big\|(v_n-v_{n-1} )_{n=k}^N\big\|_{ L^p(X)} 
+ \big\|(v_n )_{n=k}^N\big\|_{ L^p(D)} \\
&\le C\Big(\big\|(f_n)_{n=k}^N\big\|_{ L^p(X)} 
+\frac{1}{\tau}\big\|(v_i-v_{i-1} )_{i=1}^{k-1}\big\|_{ L^p(X)} 
+ \big\|(v_i )_{i=1}^{k-1}\big\|_{ L^p(D)} \Big) . 
\end{aligned}
\end{equation}
}
\end{remark}

\section{Discrete maximal $L^p$-regularity of BDF methods}\label{Sec:MaxReg}

We will work with the abstract assumptions \eqref{en:A1}--\eqref{en:A3} of the previous 
section and will show that the (implicit) BDF methods satisfy the discrete maximal  
parabolic regularity property, when applied to initial value problems of the form 
\eqref{ivp} for linear parabolic equations, i.e., with right-hand side $B(t,u(t))=f(t);$
this property is of independent interest and will also play a crucial role in our stability 
analysis of the implicit--explicit BDF methods in section \ref{Se:stab}.
 We thus extend the corresponding discrete maximal  parabolic regularity result \eqref{en:A2} 
for autonomous parabolic equations 
of \cite[Theorems 4.1--4.2]{KLL} to the case of non-autonomous equations, with operators
continuous with respect to time.

\begin{proposition}[Discrete maximal parabolic regularity]\label{Prop:MaxLp}
Under assumptions \eqref{en:A1}--\eqref{en:A3}, there exist positive constants $\tau_0$ and 
$C$ {\rm(}independent of $\tau$, but possibly depending on $T${\rm)} such that, for every 
stepsize $\tau\le\tau_0$,  the $k$-step BDF method, 
\begin{equation}
\label{lin-eq-Av-bdf2}
\frac1\tau \sum_{j=0}^k \delta_j v_{n-j} + A(t_n) v_n 
= f_n,\quad k\le n\le N ,
\end{equation}
with given starting values $v_0,\dotsc,v_{k-1}\in D$, satisfies the following stability property
\begin{equation}\label{BDF_MaxReg2}
\begin{aligned}
&\frac{1}{\tau}\big\|(v_n-v_{n-1} )_{n=k}^N\big\|_{ L^p(X)} 
+ \big\|(v_n )_{n=k}^N\big\|_{ L^p(D)} \\
&\le C\Big(
\big\|(f_n)_{n=k}^N\big\|_{ L^p(X)} 
+\frac{1}{\tau}\big\|(v_i)_{i=0}^{k-1}\big\|_{ L^p(X)} 
+ \big\|(v_i )_{i=0}^{k-1}\big\|_{ L^p(D)} \Big) ,
\end{aligned}
\end{equation}
i.e., discrete maximal parabolic regularity.
\end{proposition}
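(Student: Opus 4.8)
The plan is to reduce the non-autonomous scheme \eqref{lin-eq-Av-bdf2} to the autonomous maximal-regularity estimate \eqref{en:A2} by a \emph{freezing-the-coefficient} argument. A naive freeze-and-restart on short subintervals is not available here: matching the solution at the subinterval junctions would require controlling the interface values $v_n$ in the trace interpolation norm $(X,D)_{1-1/p,p}$, whereas \eqref{BDF_MaxReg} only provides the cruder $\tfrac1\tau$-weighted starting terms, which scale like $\tau^{1/p-1}\|v_n\|_X$ and are unbounded as $\tau\to 0$. Instead I would localise in time by a fixed (i.e.\ $\tau$-independent) partition of unity, which never restarts the recursion; the price is a commutator term that cannot be absorbed into the maximal-regularity norm and must be handled by a separate $L^p(X)$-stability estimate. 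Throughout we assume $\theta>\pi/2-\alpha_k$, so that \eqref{en:A2} is in force (for $k=1,2$ this is automatic). Write $\|v\|_{\mathrm{mr}}$ for the left-hand side of \eqref{BDF_MaxReg2}.

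\emph{Localisation and freezing.} Using the bounded variation \eqref{bv-t} of $t\mapsto A(t)$ in ${\mathcal L}(D,X)$, fix $h>0$ and a partition $0=T_0<\dots<T_L=T$, with $L$ independent of $\tau$, such that each subinterval has length $\le h$ and the variation of $A(\cdot)$ over it is at most a small number $\varepsilon$ (the finitely many jumps exceeding $\varepsilon$ being placed at the partition points); take $\tau_0$ so small that each subinterval contains more than $k$ grid points, and pick a smooth partition of unity $\{\phi_\ell\}_{\ell=1}^{L}$ subordinate to the slightly enlarged subintervals, with $\|\phi_\ell'\|_{L^\infty}\le C/h$, $\phi_1\equiv 1$ near $t=0$ and $\phi_\ell\equiv 0$ near $t=0$ for $\ell\ge 2$. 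Set $v_n^\ell:=\phi_\ell(t_n)v_n$. The discrete Leibniz rule turns \eqref{lin-eq-Av-bdf2} into the \emph{autonomous} $k$-step BDF scheme
\[
\frac1\tau\sum_{j=0}^k\delta_j v_{n-j}^\ell+A(s_\ell)v_n^\ell=\phi_\ell(t_n)f_n+E_n^\ell+R_n^\ell,\qquad k\le n\le N,
\]
with $s_\ell$ in the $\ell$-th subinterval, $R_n^\ell=(A(s_\ell)-A(t_n))v_n^\ell$ and $E_n^\ell=\tfrac1\tau\sum_{j=1}^k\delta_j\big(\phi_\ell(t_{n-j})-\phi_\ell(t_n)\big)v_{n-j}$. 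Since $v^\ell$ is supported on a set where $A$ oscillates by at most $3\varepsilon$, one has $\|R_n^\ell\|_X\le 3\varepsilon\|v_n^\ell\|_D$; and $E^\ell$ is lower order, $\|E_n^\ell\|_X\le C\|\phi_\ell'\|_{L^\infty}\sum_{j=1}^k\|v_{n-j}\|_X$, so $\sum_\ell\big\|(E_n^\ell)_{n=k}^N\big\|_{L^p(X)}\le (C/h)\big\|(v_n)_{n=1}^N\big\|_{L^p(X)}$ after summation with bounded overlap. For $\ell\ge 2$ the cut-off annihilates the starting values of $v^\ell$, so \eqref{en:A2} applies to each $v^\ell$ with starting contributions surviving only for $\ell=1$, where $0\le\phi_1\le 1$. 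Summing over $\ell$, using $v=\sum_\ell v^\ell$ and the bounded overlap of the supports, and choosing $\varepsilon$ small enough to absorb the resulting small multiple of $\|v\|_{\mathrm{mr}}$, we obtain
\[
\|v\|_{\mathrm{mr}}\le C\Big(\big\|(f_n)_{n=k}^N\big\|_{L^p(X)}+\tfrac1h\big\|(v_n)_{n=1}^N\big\|_{L^p(X)}+\tfrac1\tau\big\|(v_i)_{i=0}^{k-1}\big\|_{L^p(X)}+\big\|(v_i)_{i=0}^{k-1}\big\|_{L^p(D)}\Big).
\]

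\emph{The lower-order term and conclusion.} It remains to bound $\big\|(v_n)_{n=1}^N\big\|_{L^p(X)}$ by the right-hand side of \eqref{BDF_MaxReg2}, uniformly in $\tau$ --- a purely stability-type, non-maximal estimate. Here I would solve \eqref{lin-eq-Av-bdf2} recursively, at each step inverting $\delta_0\tau^{-1}+A(t_n)$ and using the uniform bounds $\|(\delta_0\tau^{-1}+A(s))^{-1}\|_{{\mathcal L}(X)}\le C\tau$ and $\|(\delta_0\tau^{-1}+A(s))^{-1}\|_{{\mathcal L}(X,D)}\le C$, together with the power-boundedness in $X$ of the autonomous BDF propagator (all resting on the sectoriality of $-A(s)$ on $X$ with angle $>\pi/2-\alpha_k$ behind \eqref{en:A1}--\eqref{en:A2}), and close with a discrete Gronwall inequality on the state vectors $(v_n,\dots,v_{n-k+1})$, the coefficient increments summing to $\le M_2$ by \eqref{bv-t}. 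This yields an $\ell^\infty(X)$---and hence $L^p(X)$---bound by $\big\|(f_n)_{n=k}^N\big\|_{L^p(X)}+\tfrac1\tau\big\|(v_i)_{i=0}^{k-1}\big\|_{L^p(X)}$ with a constant depending on $T$ and $M_2$; inserting it into the previous display gives \eqref{BDF_MaxReg2}, with $C$ and $\tau_0$ depending on $T$ only through $L$ and $M_2$.

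I expect the main obstacle to be exactly the non-circularity used here: the commutator $E^\ell$ carries a factor $1/h$ that is \emph{not} small, so it cannot be absorbed into $\|v\|_{\mathrm{mr}}$ and must instead be dominated by the independent $L^p(X)$-stability bound, whose proof for the time-dependent $A(t)$ is where the bounded-variation hypothesis \eqref{bv-t} and the sectoriality genuinely do the work; the remaining ingredients --- the discrete Leibniz computation, the bounded-overlap summations, and the discrete Gronwall --- are routine.
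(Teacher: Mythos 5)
Your overall strategy (freeze the coefficient and treat the time dependence as a perturbation of the autonomous estimate \eqref{en:A2}) is the right one, but the implementation has two genuine gaps. First, the partition-of-unity localisation is incompatible with the actual hypothesis \eqref{bv-t}, which only requires $t\mapsto A(t)$ to be of bounded variation and hence permits jump discontinuities (the paper stresses this relaxation explicitly). At a jump of size $J$ that is not small, any admissible family of cut-offs contains some $\phi_\ell$ that is bounded below on a two-sided neighbourhood of the jump; on $\operatorname{supp}\phi_\ell$ the oscillation of $A$ is then at least $J$, so $\|R_n^\ell\|_X$ is of order $J\|v_n^\ell\|_D$ and cannot be absorbed, no matter how the partition is refined --- while the alternative of a sharp cut-off (restart) runs into precisely the $\tau^{1/p-1}$ blow-up of the starting terms that you yourself identified. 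Your argument therefore covers only uniformly continuous coefficients. Second, the auxiliary $L^p(X)$-stability bound needed to dominate the commutator $E^\ell$ is not routine: step-by-step inversion of $\delta_0\tau^{-1}+A(t_n)$ gives $\|v_n\|_X\le C\tau\|f_n\|_X+C\sum_{j\ge1}\|v_{n-j}\|_X$ with $C$ not of the form $1+O(\tau)$, so iteration is exponentially unstable; one must instead invoke uniform power-boundedness in $X$ of the $k$-step BDF companion propagator for a sectorial operator (a Palencia-type theorem, for $k\ge3$ comparable in depth to \eqref{en:A2} itself) and then perturb it, and the perturbation $(A(s)-A(t_n))v_n$ is controlled in $X$ only through $\|v_n\|_D$, so closing a pure $X$-norm Gronwall needs smoothing estimates for the discrete propagator that you assert but do not establish. (A cheaper closure of the lower-order term is to bound $\|v_m\|_X$ by the telescoped differences $\tfrac1\tau\|(v_n-v_{n-1})\|_{L^1(X)}$ and run a Gronwall inequality on $\|v_m\|_X^p$, as the paper does in its stability proof; but gap one would remain.)

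The paper's proof avoids both difficulties and is considerably shorter: for each $m$ it freezes the operator at the \emph{endpoint} $t_m$, applies \eqref{BDF_MaxReg} on $k\le n\le m$ with forcing $f_n+(A(t_m)-A(t_n))v_n$, and estimates the perturbation by Abel summation. With $E_\ell:=\tau\sum_{n=k}^{\ell}\|v_n\|_D^p$ one has
\begin{equation*}
\tau\sum_{n=k}^m\|A(t_m)-A(t_n)\|_{{\mathcal L}(D,X)}^p\|v_n\|_D^p
\le\sum_{n=k}^{m-1}\big(\|A(t_m)-A(t_n)\|_{{\mathcal L}(D,X)}^p-\|A(t_m)-A(t_{n+1})\|_{{\mathcal L}(D,X)}^p\big)E_n,
\end{equation*}
the boundary term at $n=m$ vanishing precisely because the freezing point is the endpoint; the coefficients are bounded by $c_\star\|A(t_{n+1})-A(t_n)\|_{{\mathcal L}(D,X)}$ and are summable by \eqref{bv-t}, so a discrete Gronwall inequality on $E_m$ yields \eqref{BDF_MaxReg2} with no smallness requirement, no localisation, no separate stability estimate, and full insensitivity to jumps.
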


\begin{proof} 
For $k\le n\le m\le N$, we rewrite the numerical method \eqref{lin-eq-Av-bdf2} in the form
\begin{equation}\label{Eqvn-j}
\frac1\tau \sum_{j=0}^k \delta_j v_{n-j} + A_m v_n 
= f_n +  (A_m - A_n ) v_n,  
\end{equation}
with $A_j:=A(t_j),$ and shall use a discrete perturbation argument.
First, applying the discrete maximal regularity of the implicit $k$-step BDF method
for autonomous equations,
namely \eqref{BDF_MaxReg}, to \eqref{Eqvn-j}, we obtain the estimate
\begin{equation} \label{MaxLp-vn}
\begin{aligned}
&\frac{1}{\tau}\big\|(v_n-v_{n-1} )_{n=k}^m \big\|_{ L^p(X)} 
+ \big\|(v_n )_{n=k}^m \big\|_{ L^p(D)} \\
&\le C\big\|(f_n)_{n=k}^m \big\|_{ L^p(X)} 
+C\big\|\big ( (A_m - A_n)  v_n \big )_{n=k}^m \big\|_{ L^p(X)} \\
&\quad +C\Big( \frac{1}{\tau}\big\|(v_i)_{i=0}^{k-1}\big\|_{ L^p(X)} 
+ \big\|(v_i )_{i=0}^{k-1}\big\|_{ L^p(D)} \Big). 
\end{aligned}
\end{equation}
We now let $E_{k-1}:=0$ and 
\begin{equation}\label{E1}
E_\ell :=\big\|(v_n )_{n=k}^\ell \big\|_{ L^p(D)}^p =\tau\sum_{n=k}^\ell  \|v_n\|_{D}^p,
\quad \ell=k,\dotsc,N,
\end{equation}
and focus on the second term on the right-hand side of \eqref{MaxLp-vn}. Denoting for 
notational simplicity the operator norm $\|\cdot\|_{{\mathcal L}(D,X)}$ by $\|\cdot\|,$ we first note that
\begin{align*}
&{}\big\|\big ( (A_m - A_n ) v_n \big )_{n=k}^m \big\|_{L^p(X)}^p
=\tau\sum_{n=k}^m  \| (A_m - A_n ) v_n\|_{X}^p\\
&{}\le \tau\sum_{n=k}^m  \|A_m - A_n\|^p\|v_n\|_{D}^p
=\sum_{n=k}^m  \|A_m - A_n\|^p(E_n-E_{n-1}),
\end{align*}
whence
\begin{equation}\label{BV1}
\big\|\big ( (A_m - A_n ) v_n \big )_{n=k}^m \big\|_{L^p(X)}^p
\le \sum_{n=k}^{m-1}\big (  \|A_m - A_n\|^p- \|A_m - A_{n+1}\|^p\big )E_n.
\end{equation}
Now, since $\|A_j\|\le M_1,$ it is easily seen that
\[ \big | \|A_m - A_n\|^p- \|A_m - A_{n+1}\|^p\big |\le c_\star\,
\big | \|A_m - A_n\|- \|A_m - A_{n+1}\|\big |\]
with $c_\star:=p(2M_1)^{p-1};$ therefore,
\begin{equation}\label{BV2}
\big | \|A_m - A_n\|^p- \|A_m - A_{n+1}\|^p\big |\le c_\star\|A_{n+1} - A_n\|,
\end{equation}
and \eqref{BV1} yields
\begin{equation}\label{BV3}
\big\|\big ( (A_m - A_n ) v_n \big )_{n=k}^m \big\|_{L^p(X)}^p
\le c_\star\sum_{n=k}^{m-1}\|A_{n+1} - A_n\|E_n.
\end{equation}
Now, letting
\begin{equation*}
F_m :=
\big\|(f_n)_{n=k}^m \big\|_{ L^p(X)}^p    
 +\Big( \frac{1}{\tau}\big\|(v_i)_{i=0}^{k-1}\big\|_{ L^p(X)}   
+ \big\|(v_i )_{i=0}^{k-1}\big\|_{ L^p(D)} \Big)^p  ,
\end{equation*}  
considering the $p^{\rm th}$ power of \eqref{MaxLp-vn}, 
and using \eqref{BV3}, we have 
\[E_m\le C c_\star\sum_{n=k}^{m-1}\|A_{n+1} - A_n\|E_n+CF_m,\]
i.e.,
\begin{equation}\label{BV4} 
E_m\le C\sum_{n=k}^{m-1} a_nE_n +CF_m , \quad m=k,\dotsc,N,
\end{equation} 
with $a_n:=c_\star\|A_{n+1} - A_n\|.$ 
In view of the bounded variation condition \eqref{bv-t}, the sum $\sum_{n=k}^N a_n$
is uniformly bounded by a constant independent of the time step $\tau;$
therefore, a discrete Gronwall-type argument 
applied to \eqref{BV4} yields 
\begin{equation} 
E_N \le  CF_N .
\end{equation} 
In other words, we established the estimate
\begin{equation}  
\begin{aligned}
&{} \big\|(v_n )_{n=k}^N \big\|_{ L^p(D)} \\
&{}\le  C\Big(
\big\|(f_n)_{n=k}^N \big\|_{ L^p(X)}    + \frac{1}{\tau}\big\|(v_i)_{i=0}^{k-1}\big\|_{ L^p(X)} 
+ \big\|(v_i )_{i=0}^{k-1}\big\|_{ L^p(D)} \Big) .
\end{aligned}
\end{equation} 
Then, from \eqref{lin-eq-Av-bdf2} we furthermore obtain 
\begin{equation}
\big\|\big(\frac1\tau \sum_{j=0}^k \delta_j v_{n-j}\big)_{n=k}^N\big\|_{L^p(D)}
\le C\big (\|(f_n)_{n=k}^N\|_{L^p(D)}+ \|(v_n)_{n=k}^N\|_{L^p(D)}\big ) .
\end{equation}
The last two estimates imply \eqref{BDF_MaxReg2}. 
\end{proof}

\section{Angle of analyticity}  \label{Sec:Angle}

Assuming \eqref{en:A1} and \eqref{en:A3}, 
we show here that the angle $\theta$ 
of analyticity of the semigroups generated by $-A_H(s), s\in [0,T],$
exceeds $\arcsin(1/\lambda).$

\begin{lemma}\label{LemmaAngle}
Under assumptions \eqref{en:A1} and \eqref{en:A3}, 
we have $\theta\ge \arcsin(1/\lambda)$. 

\end{lemma}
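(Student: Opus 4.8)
The plan is to exploit the equivalent form \eqref{bounded-a-new} of condition \eqref{bounded-a}, which says that in the time-dependent inner product $\langle A_s(s)\cdot,\cdot\rangle$ on $V$, the operator $A(s):V\to V'$ has norm at most $\lambda$, while its real part $\Real\langle A(s)v,v\rangle = \|v\|_s^2$ is exactly the square of that norm on the diagonal. Fixing $s\in[0,T]$ and writing $\mathcal H$ for the Hilbert space $H$ renormed so that $D_H\hookrightarrow V$ is isometric via $\|\cdot\|_s$, the idea is to estimate, for $v\in D_H$ and a complex number $z$ in a sector, the quantity $\Real\langle (1+zA(s))v, v\rangle_s$ or, more directly, to run a standard numerical-range argument: the numerical range of $A_H(s)$ (in the $\|\cdot\|_s$ geometry) is contained in the sector $\{re^{\i\psi}: r\ge 0,\ |\psi|\le \arccos(1/\lambda)\}$. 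Indeed, for $0\neq v\in D_H$ with $\|v\|_s=1$, the complex number $\mu:=\langle A(s)v,v\rangle$ satisfies $|\mu|\le \lambda\,\Real\mu$ by \eqref{bounded-a}, which forces $\Real\mu\ge 0$ and $|\Imag\mu|/\Real\mu\le \sqrt{\lambda^2-1}$, i.e.\ $|\arg\mu|\le\arctan\sqrt{\lambda^2-1}=\arccos(1/\lambda)$.

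The key step is then the standard passage from a numerical-range (sectoriality) bound to an angle-of-analyticity bound. Since $A_H(s)$ is a densely defined operator on a Hilbert space whose numerical range lies in the closed sector $\overline{\varSigma}_{\omega}$ with $\omega:=\arccos(1/\lambda)<\pi/2$, and since $-A_H(s)$ generates a $C_0$-semigroup (this is given by \eqref{en:A1}), a classical result — see e.g.\ the Lumer--Phillips circle of ideas, or Kato's book — gives that $-A_H(s)$ generates a bounded analytic semigroup of angle at least $\pi/2-\omega=\pi/2-\arccos(1/\lambda)=\arcsin(1/\lambda)$. Concretely, I would verify that for every $\psi$ with $|\psi|<\arcsin(1/\lambda)$ the rotated operator $e^{\i\psi}A_H(s)$ still has numerical range in the open right half-plane (its numerical range sits in $\overline{\varSigma}_{\omega+|\psi|}$ with $\omega+|\psi|<\pi/2$), hence $-e^{\i\psi}A_H(s)$ is m-accretive in the $\|\cdot\|_s$ norm (using that $\lambda\in\operatorname{Res}(A_H(s))$ eventually, from analyticity already guaranteed in \eqref{en:A1}) and generates a contraction semigroup; this is exactly the statement that the analytic semigroup $\{E_s^H(z)\}$ extends to $\varSigma_{\arcsin(1/\lambda)}$ with uniform bound. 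Taking the infimum over $s\in[0,T]$ then yields $\theta=\inf_s\theta_s\ge\arcsin(1/\lambda)$.

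The main obstacle is a bookkeeping one rather than a conceptual one: the numerical range estimate is naturally phrased with respect to the $s$-dependent inner product $\langle A_s(s)\cdot,\cdot\rangle$ on $V$ (equivalently on $\mathcal H=H$), so one must be careful that "angle of analyticity of the semigroup on $H$" is unchanged when $H$ is given a uniformly equivalent Hilbert norm — which it is, since analyticity of a semigroup and the sector of analyticity are isomorphism-invariant, and the norms $\|\cdot\|_s$ are uniformly equivalent to $\|\cdot\|_V$ and hence (on $D_H$, via the graph norm) control things uniformly in $s$. A second minor point is to confirm that the numerical-range/m-accretivity argument is applied to the generator $A_H(s)$ with its correct domain $D_H$, and that \eqref{en:A1} already supplies existence of the semigroup so that one only needs the \emph{sharper} angle, not generation from scratch; for that one invokes, e.g., the characterization that a densely defined operator with numerical range in a sector of half-angle $<\pi/2$ and nonempty resolvent set in the complement generates a bounded analytic semigroup of the complementary angle. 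Once those identifications are in place, the proof is short.
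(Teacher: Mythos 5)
Your proof is correct and rests on the same core observation as the paper's: condition \eqref{bounded-a} places the numerical range of $A_H(s)$ (with respect to the standard $H$ inner product, since the duality pairing restricted to $V\times H$ is the $H$ inner product and $A(s)v=A_H(s)v$ for $v\in D_H$) inside the sector $\overline{\varSigma}_{\arccos(1/\lambda)}$, whose complementary angle is $\arcsin(1/\lambda)$. The difference is in execution. You package the conclusion via the classical theorem ``numerical range in a sector of half-angle $\omega<\pi/2$ plus one resolvent point outside the sector implies sectoriality of angle $\omega$, hence bounded analyticity of angle $\pi/2-\omega$,'' taking the resolvent point from the half-plane resolvent guaranteed by \eqref{en:A1}. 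The paper instead verifies the two conditions of the characterization in \cite[Theorem 3.7.11]{ABHN} by hand: invertibility of $z+A_H(s)$ on $\varSigma_{\varphi+\pi/2}$ is obtained from the compact imbedding $D_H\hookrightarrow\hookrightarrow H$ (Fredholm of index zero, so injectivity suffices, and injectivity follows from the numerical range inequality), and the uniform bound on $z(z+A_H(s))^{-1}$ is an explicit two-line computation yielding the constant $(1/\lambda+1)/(1/\lambda-\sin\varphi)$. Your route buys independence from the compactness of $D_H\hookrightarrow H$ and an explicit contraction bound on each ray via Lumer--Phillips; the paper's route is self-contained and quantitative without invoking the abstract numerical-range theorem. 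One remark: your detour through the time-dependent norms $\|\cdot\|_s$ and a ``renorming of $H$'' is unnecessary and slightly confused ($\|\cdot\|_s$ is a norm on $V$, not an equivalent norm on $H$); the computation you actually perform, $\mu=\langle A(s)v,v\rangle$ with $|\mu|\le\lambda\,\Real\mu$, is scale-invariant and already lives in the unweighted $H$ inner product, so no renorming issue arises.
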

\begin{proof}
It is known that $-A_H(s)$ generates an analytic semigroup 
in the sector $\varSigma_{\vartheta},$ if and only if the following two conditions hold 
(see, e.g., \cite[Theorem 3.7.11]{ABHN}): 
\begin{enumerate}[label=(\roman*),ref=\roman*]\itemsep=2pt
\item  $z+A_H(s)$ is invertible for $z\in  \varSigma_{\vartheta}$; \label{en:i} 
\item $z(z+A_H(s))^{-1}$ is uniformly bounded on $H$  for $z\in  \varSigma_{\varphi+\pi/2}$, 
for $\varphi\in(0,\vartheta)$. \label{en:ii} 
\end{enumerate}

The analyticity of the semigroup implies that  $z+A_H(s)$ is invertible for $\Real (z)\ge 0$, and 
the compact imbedding $D_H\hookrightarrow\hookrightarrow H$ implies that $z+A_H(s)$ is a 
Fredholm operator of index zero. Hence, in order to verify \eqref{en:i}, we only need to prove 
the injectivity of the operator  $z+A_H(s)$. In fact, if $z\in  \varSigma_{\varphi+\pi/2}$ with 
$\varphi=\arcsin(1/\lambda)$, then $w\in D_H$ and $(z+A_H(s))w=0$ imply 
\[z\|w\|_H^2+(A(s)w,w)=(zw,w)+(A_H(s)w,w)=0;\]
thus, taking real parts, we have 
\[\Real (z)\|w\|_H^2 +\Real (A(s)w,w)=0 .\]
By using \eqref{bounded-a} of assumption \eqref{en:A3}, from the last two relations we see that 
\[|z|\|w\|_H^2=|(A(s)w,w)|\le \lambda \Real (A(s)w,w)=-\lambda \Real (z)\|w\|_H^2,\]
whence
\[(\lambda \Real (z)+|z|)\|w\|_H^2\le 0 .\]
Hence, since $\lambda \Real(z)+|z|>0$ for $z\in\varSigma_{\varphi+\pi/2}$, it follows that 
$\|w\|_H^2=0$. This shows the injectivity of the map  $z+A_H(s):D_H\rightarrow H$, 
which implies invertibility of this Fredholm operator.  
This proves \eqref{en:i} for $\vartheta=\arcsin(1/\lambda)$. 

To verify \eqref{en:ii}, we assume that $z\in  \varSigma_{\varphi+\pi/2}$ 
with $\varphi<\arcsin(1/\lambda)$, and 
$z(z+A_H(s))^{-1}v=w$. Then 
$(z+A_H(s))w=zv$.
Taking in this relation the inner product with $w$, we get 
\[z\|w\|_H^2+(A(s)w,w)=(zw,w)+(A_H(s)w,w)=(zv,w),\]
whence,  taking real parts, 
\[\Real (z)\|w\|_H^2 + \Real (A(s)w,w)=\Real (zv,w) .\]
In view of the last two relations, we have
\begin{align*}
|z|\|w\|_H^2 
&\le |(zv,w)| + |(A(s)w,w)| \\
&\le |z|\|v\|_H\|w\|_H + \lambda \Real (A(s)w,w)
\qquad\text{(in view of \eqref{en:A3})} \\
&= |z|\|v\|_H\|w\|_H + \lambda \Real (zv,w) -\lambda \Real (z)\|w\|_H^2\\
&\le (1+\lambda)|z|\|v\|_H\|w\|_H  -\lambda \Real (z)\|w\|_H^2 , 
\end{align*}
which yields 
\[(\Real (z)/|z|+1/\lambda )\|w\|_H \le (1/\lambda+1)\|v\|_H .\]
Since $\Real (z)/|z|+1/\lambda\ge -\sin\varphi +1/\lambda>0$ 
for $z\in  \varSigma_{\varphi+\pi/2}$ with $\varphi< \arcsin(1/\lambda)$, it follows that 
\[\|z(z+A_H(s))^{-1}v\|_H=\|w\|_H 
\le \frac{1/\lambda+1}{1/\lambda -\sin\varphi}\|v\|_H ,
\quad\forall\, z\in\varSigma_\varphi .\]
Since this estimate is valid for arbitrary $\varphi<\arcsin (1/\lambda)$, it follows that \eqref{en:ii}
is valid for $\vartheta= \arcsin(1/\lambda)$. 
The proof is complete. 
\end{proof}

\section{Stability}\label{Se:stab}
In this section we prove local stability of the implicit--explicit BDF schemes \eqref{abg}.
We shall combine this stability result with the easily established consistency
of the schemes to derive optimal order error estimates in section \ref{Se:cons}.

Besides the approximations $u_n\in D, n=0,\dotsc,N,$ satisfying \eqref{abg},
we consider the nodal values $u^\star_m:=u(t_m)$ of the solution $u$ of the initial value 
problem \eqref{ivp}, which satisfy the perturbed equation
\begin{equation}
\label{ivp2-bdf}
\frac{1}{\tau}\sum\limits^{k}_{i=0}\delta_i u_{n-i}^\star + A(t_n) u_n^\star
=\sum \limits^{k-1}_{i=0}\gamma_iB(t_{n-i-1},u^\star_{n-i-1})
+d_n,  \quad k\le n \le N.\\
\end{equation}
We assume for the time being, and shall verify in the next section, 
that the consistency error $(d_n)$ is  bounded by
\begin{equation}
\label{est-delta-bdf}
 \big\|(d_n)_{n=k}^N\big\|_{ L^p(X)} \le \delta 
\end{equation}
and also that the errors of the starting values are bounded by
\begin{equation}
\label{est-err-start}
\frac1\tau\,   \big\|(u_i - u_i^\star)_{i=0}^{k-1}\big\|_{ L^p(X)}
+\big\|( u_i - u_i^\star)_{i=0}^{k-1}\big\|_{ L^p(D)}  \le \delta,
\end{equation}
with $\delta$ a sufficiently small constant. 
We then have the following stability results for the BDF solutions. 

\begin{proposition}[Stability of the implicit--explicit BDF schemes \eqref{abg}]
\label{prop:stability} 
Consider time discretization of the initial value problem \eqref{ivp}
by the  implicit--explicit $k$-step BDF method \eqref{abg}--\eqref{BDF1}, 
with $1\le k\le 6$ and starting values $u_0,\dots,u_{k-1}\in D$, and assume that
the stability condition \eqref{intr-stab-abg} is satisfied.
Under the assumptions \eqref{en:A1}--\eqref{en:A5} and \eqref{est-delta-bdf}--\eqref{est-err-start}, 
there exist positive constants $\tilde\lambda_0$ and $\delta_0$ such that, for 
$\tilde\lambda\le\tilde\lambda_0$ and $\delta\le\delta_0$, the errors $e_n=u_n-u_n^\star$ 
between the solutions of \eqref{abg} and \eqref{ivp2-bdf} are bounded by
\begin{align}
&\frac{1}{\tau}\big\|(e_n-e_{n-1} )_{n=k}^N\big\|_{ L^p(X)} +  \big\|(e_n )_{n=k}^N\big\|_{ L^p(D)}
 \le C\delta , \label{conv1}\\
& \big\|(e_n )_{n=k}^N\big\|_{ L^\infty(W)} \le C\delta, \label{conv2}
\end{align}
with a constant $C$ depending on 
$\| (u^\star_n)_{n=0}^N \|_{L^\infty(W)}$, $\| (u^\star_n)_{n=0}^N \|_{L^p(D)}$, and $T$, 
but independent of $\delta$ and $\tau$. 
\end{proposition}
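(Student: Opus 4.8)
The strategy is a discrete perturbation/fixed-point argument built on the discrete maximal regularity established in Proposition \ref{Prop:MaxLp}, in the spirit of \cite{ALL}. Subtracting \eqref{ivp2-bdf} from \eqref{abg}, the error $e_n = u_n - u_n^\star$ satisfies
\[
\frac1\tau\sum_{i=0}^k \delta_i e_{n-i} + A(t_n) e_n
= \sum_{i=0}^{k-1}\gamma_i\big[B(t_{n-i-1},u_{n-i-1}) - B(t_{n-i-1},u^\star_{n-i-1})\big] - d_n,
\quad k\le n\le N.
\]
The plan is to regard the right-hand side as a forcing term in $L^p(X)$ and apply Proposition \ref{Prop:MaxLp}: since the stability condition \eqref{intr-stab-abg} together with Lemma \ref{LemmaAngle} gives $\theta \ge \arcsin(1/\lambda) > \pi/2 - \alpha_k$, hypothesis \eqref{en:A2} (equivalently the non-autonomous version \eqref{BDF_MaxReg2}) is available. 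The first difficulty is that the Lipschitz bound \eqref{Lipschitz} is only valid inside the tube $T^D_{u,r_0}$, so the argument must be run on a maximal interval $n\le n^\star$ on which the numerical solution is guaranteed to stay in the tube, and then a continuation/bootstrap argument must close the loop; this is the standard device for such quasilinear-type estimates.

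\textbf{Key steps.} First I would fix $n^\star$ as the largest index $\le N$ such that $\|u_n - u(t_n)\|_W \le r_0$ for all $n\le n^\star$ (for small $\delta$ and the assumed accuracy of the starting values, $n^\star \ge k-1$). On $k\le n\le n^\star$ both $u_n$ and $u_n^\star$ lie in the tube, so \eqref{Lipschitz} applies and gives, for each $n$,
\[
\|B(t_{n},u_{n}) - B(t_{n},u^\star_{n})\|_X
\le \tilde\lambda\|e_n\|_D + C_B\big(\|u_n\|_D + \|u^\star_n\|_D\big)\|e_n\|_W .
\]
Next I would take $L^p(X)$ norms over $n=k,\dots,n^\star$, use the boundedness of the explicit coefficients $\gamma_i$ and a shift of indices (absorbing the $k-1$ boundary terms into the starting-value contribution, controlled by \eqref{est-err-start}), and feed the result into \eqref{BDF_MaxReg2}. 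Writing $\mathcal E := \frac1\tau\|(e_n-e_{n-1})_{n=k}^{n^\star}\|_{L^p(X)} + \|(e_n)_{n=k}^{n^\star}\|_{L^p(D)}$, this yields
\[
\mathcal E \le C\Big(\delta + \tilde\lambda\,\|(e_n)_{n=k}^{n^\star}\|_{L^p(D)}
+ \big\|\big((\|u_n\|_D+\|u^\star_n\|_D)\|e_n\|_W\big)_{n=k}^{n^\star}\big\|_{L^p(X)}\Big).
\]
The term with $\tilde\lambda$ is absorbed into the left-hand side provided $\tilde\lambda \le \tilde\lambda_0$ small (this is the non-quantifiable smallness the authors warn about, since $C$ is not explicit). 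For the remaining term I would use the interpolation inequality \eqref{Compact_interp}, $\|e_n\|_W \le \varepsilon\|e_n\|_D + C_\varepsilon\|e_n\|_X$, splitting each factor; the $\varepsilon\|e_n\|_D$ part is again absorbed (choosing $\varepsilon$ small relative to $\|(u^\star_n)\|_{L^\infty(D)}$-type constants), while the $C_\varepsilon\|e_n\|_X$ part is a lower-order term in $X$ that I would control by a discrete Gronwall argument after noting $\|e_n\|_X \le \tau\sum_{j\le n}\frac1\tau\|e_j-e_{j-1}\|_X + \|e_{k-1}\|_X$, i.e. bounding the $X$-norm of $e_n$ by a time-integral of the quantity already on the left of \eqref{BDF_MaxReg2}. (One must be slightly careful: $\|u_n\|_D$ need not be uniformly bounded pointwise, but its $L^p$-norm is controlled by $\|(u^\star_n)\|_{L^p(D)}$ plus $\mathcal E$, which on a first pass can be taken $\le$ a fixed constant since $\delta$ is small; a mild iteration handles this cleanly.) Carrying this out gives $\mathcal E \le C\delta$ on $[k,n^\star]$, which is \eqref{conv1} restricted to that range.

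\textbf{Closing the bootstrap and the $L^\infty(W)$ bound.} From $\mathcal E \le C\delta$ I would derive \eqref{conv2} on $[k,n^\star]$ by invoking the discrete analogue of the embedding \eqref{MaxLp_interp}: interpreting $(e_n)$ as (shifts of) a piecewise-linear or piecewise-constant function vanishing at $t=k-1$ after subtracting the starting error, one gets $\|(e_n)_{n=k}^{n^\star}\|_{L^\infty(W)} \le C_W(\frac1\tau\|(e_n-e_{n-1})\|_{L^p(X)} + \|(e_n)\|_{L^p(D)}) + (\text{starting-value error}) \le C\delta$. Now choosing $\delta_0$ so small that $C\delta_0 < r_0$ forces $\|u_n - u(t_n)\|_W < r_0$ strictly for all $n\le n^\star$; a standard continuation argument (if $n^\star < N$ then by continuity the tube condition would still hold at $n^\star+1$, contradicting maximality) shows $n^\star = N$. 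Hence the estimates hold on the whole range $k\le n\le N$, giving \eqref{conv1} and \eqref{conv2}. \textbf{The main obstacle} is the bookkeeping that makes the two absorptions (the $\tilde\lambda$-term and the $\varepsilon\|e_n\|_D$-term) simultaneously possible while keeping the residual lower-order $X$-term amenable to discrete Gronwall, together with the non-quantitative smallness of $\tilde\lambda_0$ forced by the implicit constant $C$ in \eqref{BDF_MaxReg}; handling the $k-1$ explicit-method boundary terms and the index shift in the nonlinearity requires care but is routine.
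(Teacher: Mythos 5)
Your overall architecture coincides with the paper's: error equation, discrete maximal regularity from Proposition \ref{Prop:MaxLp} (correctly noting that \eqref{intr-stab-abg} and Lemma \ref{LemmaAngle} give $\theta>\pi/2-\alpha_k$ so that \eqref{en:A2} applies), the local Lipschitz bound inside a tube, absorption of the $\tilde\lambda$-term, a discrete Gronwall closure of the lower-order $X$-term, the piecewise-linear interpolant together with \eqref{MaxLp_interp} for the $L^\infty(W)$ bound, and a maximality/continuation argument to show the tube condition persists up to $N$. The Gronwall step is organized slightly differently (the paper passes through $\|(e_n)\|_{L^\infty(X)}$ via an $L^\infty$--$L^1$ interpolation of the $L^p$ norm), but that is cosmetic.

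There is, however, one step that fails as written: the absorption of the term $C_B\|u_n\|_D\,\|e_n\|_W$. You substitute $\|e_n\|_W\le\varepsilon\|e_n\|_D+C_\varepsilon\|e_n\|_X$ into the product, which produces $\varepsilon\,\|u_n\|_D\,\|e_n\|_D$; the $L^p$-in-$n$ norm of this product of two sequences that are each only controlled in $L^p(D)$ is not bounded by $\varepsilon\,C\,\|(e_n)\|_{L^p(D)}$ with a constant of the type allowed in the statement (it would require $\sup_n\|u_n\|_D$, and your parenthetical appeal to controlling the $L^p$-norm of $\|u_n\|_D$ does not give a bound on the $L^p$-norm of the product). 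The paper avoids this by first splitting $\|u_n\|_D\le\|e_n\|_D+\|u_n^\star\|_D$ and then using the tube bound $\|e_n\|_W\le r$ on the \emph{coefficient} of $\|e_n\|_D$, so that the dangerous term becomes $(\tilde\lambda+C_Br)\|e_n\|_D$ plus the linear-in-$e$ term $2C_B\|u_n^\star\|_D\|e_n\|_W$; crucially the tube radius $r\le r_0$ is an adjustable small parameter chosen so that $C(\tilde\lambda+r)+\varepsilon\le 1/2$, whereas you fix the tube radius at $r_0$, which need not be small. The interpolation \eqref{Compact_interp} is then applied only to the remaining term $\|(e_n)\|_{L^p(W)}$, not inside the product with $\|u_n\|_D$. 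This repair is entirely compatible with your continuation argument (one simply closes the bootstrap with the strict bound $C\delta<r$ rather than $<r_0$), so the gap is local and fixable, but as stated the absorption does not go through.
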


\begin{proof}
Subtracting \eqref{ivp2-bdf} from \eqref{abg}, we obtain the following error equation,
for the errors $e_n:=u_n-u^\star_n,$ 
\begin{equation}
\label{er-eq-bdf}
\frac{1}{\tau}\sum\limits^{k}_{i=0}\delta_i e_{n-i} + A(t_n)e_n
=\sum\limits^{k-1}_{i=0}\gamma_ib_{n-i-1}-d_n,
\quad n=k,\dotsc,N,
\end{equation}
with the abbreviation $b_\ell:=B(t_\ell,u_\ell)-B(t_\ell,u^\star_\ell), \ell=0,\dotsc,N-1.$

We let $r\in(0,r_0]$ be a small number to be determined later, and let $M\le N$ be maximal such that 
\begin{equation} \label{M-RB}
\| (e_n)_{n=0}^{M-1} \|_{L^\infty(W)} \le r  .
\end{equation}
%
Then $u_n\in T_{u,r}^D, n=0,\dotsc,M-1,$ 
and assumption \eqref{en:A4} implies 
\begin{equation*}
\begin{aligned}
\| b_\ell \|_X  
&\le \tilde\lambda\|e_\ell\|_D+C_B(\|u_\ell\|_D+\|u_\ell^\star\|_D)\|e_\ell\|_W \\
&\le \tilde\lambda\|e_\ell\|_D+C_B(\|e_\ell\|_D+2\|u_\ell^\star\|_D)\|e_\ell\|_W \\
&= (\tilde\lambda+C_B\|e_\ell\|_W)\|e_\ell\|_D+2C_B\|u_\ell^\star\|_D \|e_\ell\|_W  
\end{aligned}
\end{equation*}
and thus
\begin{equation}\label{M-RBb}
\| b_\ell \|_X \le (\tilde\lambda+C_Br)\|e_\ell\|_D+2C_B\|u_\ell^\star\|_D \|e_\ell\|_W,
\quad \ell=0,\dotsc,M-1 .
\end{equation}

Let us denote by $J_m$ the quantity
\begin{equation}\label{M-RBJ}
J_m:=\frac{1}{\tau}\big\|(e_n-e_{n-1} )_{n=k}^m\big\|_{ L^p(X)} +  \big\|(e_n )_{n=k}^m\big\|_{ L^p(D)}, 
\end{equation}
which we want to estimate; cf.\ the left-hand side of \eqref{conv1}.
First, applying Proposition \ref{Prop:MaxLp} to \eqref{er-eq-bdf},  
we obtain, for all $m\le M$,
\begin{equation*} 
\begin{aligned}
J_m&\le 
 C\,  \bigg\|\Big(\sum\limits^{k-1}_{i=0}\gamma_ib_{n-i-1}-d_n\Big)_{n=k}^m\bigg\|_{ L^p(X)} 
 +\frac{C}{\tau}\,   \big\|(e_i)_{i=0}^{k-1}\big\|_{ L^p(X)}
+C\big\|(e_i)_{i=0}^{k-1}\big\|_{ L^p(D)} \\
& \le  C\,  \big\|(b_{n})_{n=0}^{m-1}\big\|_{ L^p(X)} 
 +C\big\|(d_n)_{n=k}^m\big\|_{ L^p(X)}
 +\frac{C}{\tau}\,   \big\|(e_i)_{i=0}^{k-1}\big\|_{ L^p(X)}
+C\big\|(e_i)_{i=0}^{k-1}\big\|_{ L^p(D)}  \\[5pt]
&  \le C\,  \big\|(b_{n})_{n=0}^{m-1}\big\|_{ L^p(X)} +C\delta ,
\end{aligned}
\end{equation*}
where we used \eqref{est-delta-bdf} and \eqref{est-err-start};  
therefore, in view of \eqref{M-RBb}, 
\[J_m\le C(\tilde\lambda+r)\|(e_n)_{n=0}^{m-1}\|_{L^p(D)}  +C\|(e_n)_{n=0}^{m-1}\|_{L^p(W)} +C\delta.\]
Since \eqref{en:A5} implies 
\[\|(e_n)_{n=0}^{m-1}\|_{L^p(W)} \le \varepsilon\|(e_n)_{n=0}^{m-1}\|_{L^p(D)}
+C_\varepsilon\|(e_n)_{n=0}^{m-1}\|_{L^p(X)} ,\]
the last two estimates yield 
\begin{equation}\label{MaxLp-new1} 
\begin{aligned}
J_m
&\le C(\tilde\lambda+r)\|(e_n)_{n=0}^{m-1}\|_{L^p(D)}  \\
&\quad 
+\varepsilon\|(e_n)_{n=0}^{m-1}\|_{L^p(D)}
+C_\varepsilon\|(e_n)_{n=0}^{m-1}\|_{L^p(X)}
+C\delta .
\end{aligned}
\end{equation}
Now, with a sufficiently small positive constant $\tilde\lambda_0,$ 
for $\tilde\lambda\le\tilde\lambda_0$ and suitably small 
$r$ and $\varepsilon,$ we have $C(\tilde\lambda+r)+\varepsilon\le 1/2$ ($r$ is a 
fixed constant from now on). 
As a consequence, \eqref{MaxLp-new1}  yields
\begin{equation}\label{MaxLp-new2}  
J_m\le C\|(e_n)_{n=0}^{m-1}\|_{L^p(X)} +C\delta .
\end{equation}
Combining this estimate with the upper bound \eqref{est-err-start} of the starting errors, 
and setting $e_{-1}=0$, we obtain 
\begin{equation} \label{MaxLpent}
\frac{1}{\tau}\big\|(e_n-e_{n-1} )_{n=0}^m\big\|_{ L^p(X)} 
+  \big\|(e_n )_{n=0}^m\big\|_{ L^p(D)}\le C\|(e_n)_{n=0}^{m-1}\|_{L^p(X)} +C\delta .
\end{equation}
Now, with $p'$ such that $\frac 1p +\frac 1{p'}=1,$  we have
\begin{equation*} \label{en_LinftyX}
\begin{aligned}
\|(e_n)_{n=0}^m\|_{L^\infty(X)}
&\le \|e_0\|_{X}
+\sum_{n=1}^m\big\|e_n-e_{n-1} \big\|_{ X} \\
&= \|e_0\|_{X}
+\frac{1}{\tau}\big\|(e_n-e_{n-1})_{n=1}^m \big\|_{L^1(X)} \\
&\le \tau^{1/p'}\delta
+\frac{T^{\frac{1}{p'}}}{\tau}\big\|(e_n-e_{n-1})_{n=1}^m \big\|_{L^p(X)} \\
&\le \tau^{1/p'}\delta+ C\|(e_n)_{n=0}^{m-1}\|_{L^p(X)}
+C\delta \\
&\le \varepsilon\|(e_n)_{n=0}^{m-1}\|_{L^\infty(X)}
+C_\varepsilon\|(e_n)_{n=0}^{m-1}\|_{L^1(X)}
+C\delta;
\end{aligned}
\end{equation*}
%
in the last step of this estimate we used the following inequality \cite[Theorem 2.11]{Adams}:
\begin{equation*} 
\begin{aligned}
\|(e_n)_{n=0}^{m-1}\|_{L^p(X)}
&\le 
\|(e_n)_{n=0}^{m-1}\|_{L^\infty(X)}^{1-\frac{1}{p}}
\|(e_n)_{n=0}^{m-1}\|_{L^1(X)}^{\frac{1}{p}} \\
&\le  \varepsilon\|(e_n)_{n=0}^{m-1}\|_{L^\infty(X)}
+C_\varepsilon\|(e_n)_{n=0}^{m-1}\|_{L^1(X)} .
\end{aligned}
\end{equation*}
Therefore, 
\begin{equation} 
\begin{aligned}
\|(e_n)_{n=0}^m\|_{L^\infty(X)}
&\le C\|(e_n)_{n=0}^{m-1}\|_{L^1(X)}
+C\delta ,
\end{aligned}
\end{equation}
which holds for all $0\le m\le M$. 
Then Gronwall's inequality implies $\|(e_n)_{n=0}^M\|_{L^\infty(X)}
\le C\delta .$
Substituting this estimate into \eqref{MaxLpent}, we obtain 
\begin{equation}  \label{FnlMaxLp}
\begin{aligned}
&\frac{1}{\tau}\big\|(e_n-e_{n-1} )_{n=0}^M\big\|_{ L^p(X)} 
+  \big\|(e_n )_{n=0}^M\big\|_{ L^p(D)}  
\le C\delta .
 \end{aligned}
\end{equation}
Let $\widetilde e\in W^{1,p}(-\tau,M\tau;X)\cap L^p(-\tau,M\tau;D)$ denote the piecewise linear 
interpolant  of $e_n$, $n=-1,0,1,\dotsc,M$, at the points $t_n$. Then we have 
\[\|\widetilde e\,'\|_{ L^p(-\tau,M\tau;X)} 
+ \|\widetilde e\|_{ L^p(-\tau,M\tau;D)} 
\le C\bigg(\frac{1}{\tau}\big\|(e_n-e_{n-1} )_{n=0}^M\big\|_{ L^p(X)} 
+  \big\|(e_n )_{n=0}^M\big\|_{ L^p(D)} \bigg),\]
whence, in view of \eqref{FnlMaxLp},
\begin{equation} \label{FnlMaxLp3}  
\|\widetilde e\,'\|_{ L^p(-\tau,M\tau;X)} 
+ \|\widetilde e\|_{ L^p(-\tau,M\tau;D)}  \le C\delta.
\end{equation}
Now, according to assumption \eqref{en:A5}, we have
\begin{equation*}
\big\|(e_n )_{n=0}^M\big\|_{ L^\infty(W)}
=\|\widetilde e\|_{ L^\infty(-\tau,M\tau;W)} \\
\le C(\|\widetilde e\,'\|_{ L^p(-\tau,M\tau;X)} 
+  \|\widetilde e\|_{ L^p(-\tau,M\tau;D)}),
\end{equation*}
and, using \eqref{FnlMaxLp3}, we obtain
\begin{equation}   \label{FnlMaxLp2}
\big\|(e_n )_{n=0}^M\big\|_{ L^\infty(W)}\le C\delta .
\end{equation}
Estimates \eqref{FnlMaxLp} and \eqref{FnlMaxLp2} imply the existence of a positive 
constant $\delta_0$ such that $\|(e_n)_{n=0}^M\|_{L^\infty(W)} \le r $
for $\delta\le\delta_0$,   contradicting the maximality of $M$ unless $M=N$. 
Hence, \eqref{M-RB}, \eqref{FnlMaxLp} and \eqref{FnlMaxLp2} are valid for $M=N$. 
The proof is complete.
\end{proof}

\begin{remark}[On the stability condition \eqref{intr-stab-abg}]\label{Re:stab-condition}
The stability condition \eqref{intr-stab-abg}  is void for $k=1,2,$ and takes for the 
implicit--explicit $k$-step BDF method \eqref{abg}, $k=3,\dotsc,6,$ the form 
$\lambda <\lambda_k$ with
\begin{equation}
\label{lambdak}
\lambda_3=14.45087,\quad \lambda_4=3.49040,\quad \lambda_5=1.62892979,
\quad \lambda_6=1.050513.
\end{equation}
\end{remark}

\section{Error estimates}\label{Se:cons}
In this section we present our main result, maximum norm optimal order error estimates
for the implicit--explicit BDF schemes \eqref{abg}.

\begin{proposition}[Optimal order error estimates]\label{Prop:Error}
Assume \eqref{en:A1}--\eqref{en:A5}, with stability constant satisfying \eqref{intr-stab-abg},
and that the starting approximations are such that
\begin{equation}
\label{est-err-start-conv}
\frac1\tau\,   \big\|(u_i - u_i^\star)_{i=0}^{k-1}\big\|_{ L^p(X)}
+\big\|( u_i - u_i^\star)_{i=0}^{k-1}\big\|_{ L^p(D)}  \le C\tau^k.
\end{equation}
If the solution $u$ of \eqref{ivp} is sufficiently regular, 
$u\in C^{k+1}([0,T];X)\cap C^k([0,T];D)$,  then there exist positive constants $\tilde\lambda_0$ 
and $\tau_0$ such that, for $\tilde\lambda\le\tilde\lambda_0$ and $\tau\le \tau_0$, the 
errors $e_n=u_n-u_n^\star$ between the approximate solutions $u_n$ of \eqref{abg} and 
the nodal values $u_n^\star$ of the solution $u$ of \eqref{ivp} are bounded by
\begin{align}
&\frac{1}{\tau}\big\|(e_n-e_{n-1} )_{n=k}^N\big\|_{ L^p(X)} +  \big\|(e_n )_{n=k}^N\big\|_{ L^p(D)}
 \le C\tau^k , \label{er-est1}\\
& \big\|(e_n )_{n=k}^N\big\|_{ L^\infty(W)} \le C\tau^k , \label{er-est2}
\end{align}
with a constant $C$ independent of $\tau$.
\end{proposition}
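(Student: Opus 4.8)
The plan is to reduce the statement to the stability estimate of Proposition~\ref{prop:stability} by checking that, under the regularity hypothesis on $u$, both the consistency error $(d_n)$ and the starting errors satisfy the bounds \eqref{est-delta-bdf}--\eqref{est-err-start} with $\delta = C\tau^k$.

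First I would insert the exact solution into the scheme. The nodal values $u^\star_n = u(t_n)$ satisfy \eqref{ivp2-bdf} with a defect
\begin{equation*}
d_n = \Big(\frac1\tau\sum_{i=0}^k \delta_i u^\star_{n-i} - u'(t_n)\Big) + \Big(B(t_n,u(t_n)) - \sum_{i=0}^{k-1}\gamma_i B(t_{n-i-1}, u^\star_{n-i-1})\Big),
\end{equation*}
where I used $u'(t_n) + A(t_n)u^\star_n = B(t_n,u(t_n))$ to cancel the operator term. The first bracket is the local truncation error of the implicit $k$-step BDF method $(\delta,\beta)$, which has order $k$; a Taylor expansion in $X$ bounds its $X$-norm by $C\tau^k\|u^{(k+1)}\|_{C([0,T];X)}$. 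For the second bracket, the identity $\gamma(\zeta)=\zeta^{-1}[1-(1-\zeta)^k]$ shows that it is precisely the $k$-th order backward difference at $t_n$ of the sequence $\big(B(t_\ell,u(t_\ell))\big)_\ell$; since $B(t,u(t)) = u'(t) + A(t)u(t)$, the assumed regularity of $u$ together with the time-smoothness of the data (automatic in Examples~\ref{Exmp1}--\ref{Exmp4}) gives $B(\cdot,u(\cdot))\in C^k([0,T];X)$, so this term is also $O(\tau^k)$ in $X$. Hence $\max_{k\le n\le N}\|d_n\|_X\le C\tau^k$ and
\begin{equation*}
\big\|(d_n)_{n=k}^N\big\|_{L^p(X)} = \Big(\tau\sum_{n=k}^N\|d_n\|_X^p\Big)^{1/p}\le C\,T^{1/p}\tau^k,
\end{equation*}
i.e.\ \eqref{est-delta-bdf} holds with $\delta = C\tau^k$; and the assumption \eqref{est-err-start-conv} on the starting values is exactly \eqref{est-err-start} with the same $\delta$.

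Then I would invoke Proposition~\ref{prop:stability}. For $\tau$ small enough we have $\delta = C\tau^k\le\delta_0$; the constant in \eqref{conv1}--\eqref{conv2} depends only on $\|(u^\star_n)_{n=0}^N\|_{L^\infty(W)}$, $\|(u^\star_n)_{n=0}^N\|_{L^p(D)}$ and $T$, which are finite and $\tau$-independent because $u\in C^k([0,T];D)\hookrightarrow C([0,T];W)$ is a fixed function (the hypothesis $\tilde\lambda\le\tilde\lambda_0$ is the same in both statements). Proposition~\ref{prop:stability} then yields \eqref{conv1}--\eqref{conv2} with right-hand side $C\delta = C\tau^k$, which are exactly \eqref{er-est1}--\eqref{er-est2}.

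The main obstacle is the consistency analysis of the extrapolated nonlinear term: one must make precise which regularity of $t\mapsto B(t,u(t))$ in $X$ is needed for the explicit scheme $(\delta,\gamma)$ to contribute only an $O(\tau^k)$ defect, and argue that this regularity follows from the hypothesis $u\in C^{k+1}([0,T];X)\cap C^k([0,T];D)$ in conjunction with the time-smoothness of $A(t)$ and the smoothness of $B(t,\cdot)$ built into the concrete examples; the implicit part of the consistency bound is routine since $(\delta,\beta)$ is the order-$k$ BDF method.
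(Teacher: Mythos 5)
Your proposal is correct and follows essentially the same route as the paper: reduce to Proposition~\ref{prop:stability} by verifying \eqref{est-delta-bdf}--\eqref{est-err-start} with $\delta=C\tau^k$, splitting the defect into the implicit BDF truncation error and the extrapolation error of $\widetilde B(t)=B(t,u(t))$. The only (cosmetic) difference is that you bound the extrapolation term by recognizing it as the $k$-th backward difference of $\widetilde B$ via $\zeta\gamma(\zeta)=1-(1-\zeta)^k$, whereas the paper writes out the Peano-kernel integral representations from Taylor expansion and the order conditions \eqref{order}; both require exactly the boundedness of $\widetilde B^{(k)}$ in $X$, which you rightly flag as the point needing the regularity hypothesis on $u$ together with smoothness of $A(t)$ and $B(t,\cdot)$.
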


\begin{proof}
With Proposition \ref{prop:stability}  on the stability of the BDF solutions, 
we only need to establish estimates for the consistency errors $d_n$.

The order of the $k-$step methods $(\delta,\beta)$ and $(\delta,\gamma)$ is $k,$ i.e.,
\begin{equation}
\label{order}
\sum_{i=0}^k(k-i)^\ell \delta_i=\ell k^{\ell-1}=
\ell \sum_{i=0}^{k-1}(k-i-1)^{\ell-1} \gamma_i,\quad \ell=0,1,\dotsc,k.  
\end{equation}
The consistency error $d_n$ of the scheme \eqref{abg} for the solution $u$ of \eqref{ivp}, 
i.e., the amount by which the exact solution misses satisfying the implicit--explicit BDF scheme
\eqref{abg}, is given by 
\begin{equation}
\label{cons1}
\tau d_n=\sum\limits^k_{i=0}\delta_iu(t_{n-i}) + \tau A(t_n)u(t_n)
-  \tau \sum\limits^{k-1}_{i=0} \gamma_i B\big (t_{n-i-1},u(t_{n-i-1})\big ),
\end{equation}
$n=k,\dotsc,N;$ cf.\ \eqref{ivp2-bdf}. 
Letting
\[\left\{
\begin{aligned}
&d_{n,1}:=\sum\limits^k_{i=0}\delta_iu(t_{n-i}) - \tau u'(t_n), \\
&d_{n,2}:=\tau B\big (t_n,u(t_n)\big )- 
\tau\sum\limits^{k-1}_{i=0} \gamma_i B\big (t_{n-i-1},u(t_{n-i-1})\big ),
\end{aligned}
\right.\]
and using the differential equation in \eqref{ivp}, we infer that $\tau d_n=d_{n,1}+d_{n,2}.$
Now, by Taylor expanding about $t_{n-k}$ and using the
order conditions of the implicit $(\delta,\beta)$-scheme,
i.e., the first equality in \eqref{order}, and the second equality 
in \eqref{order}, respectively, we obtain
\begin{equation*}
\begin{aligned}
 d_{n,1}&{}=\frac 1{k!}\Bigg [ \sum\limits^k_{i=0} \delta_i\!
\int_{t_{n-k}}^{t_{n-i}}(t_{n-i}-s)^ku^{(k+1)}(s)\, ds
-k\tau \int_{t_{n-k}}^{t_n}(t_n-s)^{k-1}u^{(k+1)}(s)\, ds\Bigg ]\!,\\
 d_{n,2}&{}=\frac \tau {(k-1)!}\Bigg [ \int_{t_{n-k}}^{t_n}(t_n-s)^{k-1}\widetilde B^{(k)}(s)\, ds
-\sum\limits^k_{i=0}\gamma_i \!\int_{t_{n-k}}^{t_{n-i-1}}(t_{n-i-1}-s)^{k-1}
\widetilde B^{(k)}(s)\, ds\Bigg ]\!,
\end{aligned}
\end{equation*}
with $\widetilde B(t):=B(t,u(t)), t\in [0,T].$ 
Thus, under the regularity condition
\[u\in C^{k+1}([0,T];X)\cap C^k([0,T];D) ,\]
we obtain the desired optimal order consistency estimate  
%
\begin{equation}
\label{cons6}
\max_{k\le n\le N}\|d_n\|_X \le C\tau^{k} .
\end{equation}

Now, in view of the consistency estimate \eqref{cons6} and our assumption 
\eqref{est-err-start-conv} on the starting approximations, conditions 
\eqref{est-delta-bdf} and  \eqref{est-err-start} are valid with $\delta=C\tau^{k} .$
Therefore, for sufficiently small time step $\tau,$ the desired error
estimates \eqref{er-est1} and \eqref{er-est2} follow immediately from 
the corresponding estimates \eqref{conv1} and \eqref{conv2}, respectively.
\end{proof}

\begin{remark}[On the accuracy requirement for the starting approximations]\label{Re:start-accur}
The accuracy requirement \eqref{est-err-start-conv}  for the starting approximations
$u_0,\dotsc,u_{k-1}$ can be equivalently written in the form 
%
\begin{equation}
\label{start-accur1}
\max_{0\le i\le k-1}\|u_i - u_i^\star\|_X\le C\tau^{k+(1-\frac 1p)},\quad
\max_{0\le i\le k-1}\|u_i - u_i^\star\|_D\le C\tau^{k-\frac 1p}.
\end{equation}
The larger $p$ is, the stronger is the accuracy requirement \eqref{start-accur1} on the
starting approximations. 
Suitable choices of $p$ depend on the concrete application; for 
instance, for the problems in Examples \ref{Exmp1}--\ref{Exmp2}, \ref{Exmp3}, and \ref{Exmp4}, 
respectively, the conditions $2/p+d/q<1,1/p+d/q<1,$ and $4/p+d/q<2,$ respectively, are required; 
see Propositions \ref{lem:framework}--\ref{lem:framework4} and Section \ref{Sec:ProofProposition}.
\end{remark}

Proposition \ref{Prop:Error} applies directly to Examples \ref{Exmp1}--\ref{Exmp4};
for instance, in the cases of Examples \ref{Exmp1} and \ref{Exmp2}, we have:

\begin{corollary}[Application to Examples \ref{Exmp1} and \ref{Exmp2}]\label{Coroll:Error}
Assume that the stability constant satisfies \eqref{intr-stab-abg}.
Then, in the case of Example \ref{Exmp1}, 
if the solution $u$ of \eqref{eq:Exmp1} is sufficiently regular, 
\[u\in C^{k+1}\big ([0,T];W^{-1,q}(\varOmega)\big ) \cap C^k\big ([0,T];W^{1,q}_0(\varOmega)\big ) , \]
the starting approximations are such that
\begin{equation}
\frac1\tau\,   \big\|(u_i - u_i^\star)_{i=0}^{k-1}\big\|_{ L^p (W^{-1,q}(\varOmega) )}
+\big\|( u_i - u_i^\star)_{i=0}^{k-1}\big\|_{ L^p (W^{1,q}(\varOmega) )}  \le C\tau^k,
\end{equation}
with $q\in(d,\infty)\cap [2,\infty)$ and $p$ such that $2/p+d/q<1,$
and the coefficients $a$ and $b$ satisfy \eqref{intr-stab-abg3},
then, for sufficiently small time step
 $\tau$, the errors $e_n=u_n-u_n^\star$ between the approximate solutions $u_n$ of \eqref{abg} and 
the nodal values $u_n^\star$ of the solution $u$ of \eqref{eq:Exmp1} are bounded by
\begin{align}
&\frac{1}{\tau}\big\|(e_n-e_{n-1} )_{n=k}^N\big\|_{ L^p(W^{-1,q}(\varOmega))} 
+  \big\|(e_n )_{n=k}^N\big\|_{ L^p(W^{1,q}(\varOmega))}
 \le C\tau^k , \label{er-est1a}\\
&  \max_{k\le n\le N}   \|e_n \|_{L^\infty(\varOmega)} \le C\tau^k ,\label{er-est2a}  
\end{align}
with a constant $C$ independent of $\tau$.

Analogously, in the case of Example \ref{Exmp2},  
if the solution $u$ of \eqref{eq:Exmp2} is sufficiently regular, 
\[
u\in C^{k+1}\big ([0,T];L^q(\varOmega)\big )\cap C^k\big ([0,T]; 
W^{2,q}(\varOmega)\cap W_0^{1,q}(\varOmega)\big ),
\]
the starting approximations are such that
\begin{equation}
\frac1\tau\,   \big\|(u_i - u_i^\star)_{i=0}^{k-1}\big\|_{ L^p(L^q(\varOmega))}
+\big\|( u_i - u_i^\star)_{i=0}^{k-1}\big\|_{ L^p(W^{2,q}(\varOmega))}  \le C\tau^k,
\end{equation}
with $q>d$ and $p$ such that $2/p+d/q<1,$ and the coefficients $a$ and $b$ 
satisfy \eqref{intr-stab-abg3}, 
then for a sufficiently small constant $\tilde\lambda$ in \eqref{tildelambda-Exam3}
and for sufficiently small time step  $\tau$, the 
errors $e_n=u_n-u_n^\star$ between the approximate solutions $u_n$ of \eqref{abg} and 
the nodal values $u_n^\star$ of the solution $u$ of \eqref{eq:Exmp2}  are bounded by
\begin{align}
&\frac{1}{\tau}\big\|(e_n-e_{n-1} )_{n=k}^N\big\|_{ L^p(L^q(\varOmega))} 
+  \big\|(e_n )_{n=k}^N\big\|_{ L^p(W^{2,q}(\varOmega))}
 \le C\tau^k , \label{er-est1b}\\
&  \max_{k\le n\le N}   \|e_n \|_{W^{1,\infty}(\varOmega)} \le C\tau^k ,\label{er-est2b}  
\end{align}
with a constant $C$ independent of $\tau$.
\end{corollary}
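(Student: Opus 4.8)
The plan is to obtain both assertions as immediate specializations of the abstract error estimate of Proposition~\ref{Prop:Error}; the only work is to match the concrete function spaces and constants with the abstract data $V\subset H\subset V'$, $D\subset W\subset X$ and $(\lambda,\tilde\lambda)$, and to check the hypotheses of that proposition.

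First I would invoke Proposition~\ref{lem:framework}. For Example~\ref{Exmp1}, with $q\in(d,\infty)\cap[2,\infty)$ and $2/p+d/q<1$, it guarantees that assumptions \eqref{en:A1}--\eqref{en:A5} hold with $D_H=H^2(\varOmega)\cap H^1_0(\varOmega)$, $D=W^{1,q}_0(\varOmega)$, $W=L^\infty(\varOmega)$, $X=W^{-1,q}(\varOmega)$, with $\tilde\lambda=0$ and with $\lambda$ equal to the left-hand side of \eqref{intr-stab-abg2}. For Example~\ref{Exmp2}, with $q>d$ and $2/p+d/q<1$, it gives the analogous conclusion with $D=W^{2,q}(\varOmega)\cap W^{1,q}_0(\varOmega)$, $W=W^{1,\infty}(\varOmega)$, $X=L^q(\varOmega)$, the same $\lambda$, and $\tilde\lambda$ given by \eqref{tildelambda-Exam3}. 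Since $|a+\i b|^2=a^2+b^2$ and $x\mapsto\sqrt{1+x^2}$ is increasing, the coefficient hypothesis \eqref{intr-stab-abg3} is precisely the stability condition \eqref{intr-stab-abg}, i.e.\ $\lambda<1/\cos\alpha_k$, for this value of $\lambda$ (cf.\ \eqref{intr-stab-abg2}--\eqref{intr-stab-abg3}); the imposed regularity of $u$ is exactly $u\in C^{k+1}([0,T];X)\cap C^k([0,T];D)$ in the concrete spaces; and the assumption on the starting approximations is exactly \eqref{est-err-start-conv}.

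Next I would dispose of the smallness requirement on $\tilde\lambda$ contained in Proposition~\ref{Prop:Error}. In the case of Example~\ref{Exmp1} one has $\tilde\lambda=0\le\tilde\lambda_0$ automatically, so nothing is needed. In the case of Example~\ref{Exmp2}, $\tilde\lambda$ is the quantity \eqref{tildelambda-Exam3}, which controls the derivatives $\partial g_i/\partial\eta_j$ in a neighbourhood of $u$; the hypothesis ``$\tilde\lambda$ sufficiently small'' in the statement is understood as $\tilde\lambda\le\tilde\lambda_0$ with the $\tilde\lambda_0$ furnished by Proposition~\ref{Prop:Error} (and it is vacuous whenever $\bm{g}$ does not depend on $\nabla u$).

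Finally I would apply Proposition~\ref{Prop:Error}, which yields positive constants $\tilde\lambda_0,\tau_0$ such that \eqref{er-est1}--\eqref{er-est2} hold for $\tau\le\tau_0$. Writing these with $X$ and $D$ replaced by their concrete values produces \eqref{er-est1a}, respectively \eqref{er-est1b} (here one uses that for $e_n=u_n-u_n^\star\in W^{1,q}_0(\varOmega)$ the $W^{1,q}$- and $W^{1,q}_0$-norms coincide), while, since $\|(e_n)_{n=k}^N\|_{L^\infty(W)}=\max_{k\le n\le N}\|e_n\|_W$ with $W=L^\infty(\varOmega)$ for Example~\ref{Exmp1} and $W=W^{1,\infty}(\varOmega)$ for Example~\ref{Exmp2}, \eqref{er-est2} becomes \eqref{er-est2a}, respectively \eqref{er-est2b}. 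There is no genuine obstacle here: the substance of the corollary lies entirely in Propositions~\ref{lem:framework} and \ref{Prop:Error}, and the only steps requiring a line of justification are the coefficient reformulation $\eqref{intr-stab-abg2}\Leftrightarrow\eqref{intr-stab-abg3}$ and the identification of the abstract condition $\tilde\lambda\le\tilde\lambda_0$ with the ``$\tilde\lambda$ sufficiently small'' clause for Example~\ref{Exmp2}.
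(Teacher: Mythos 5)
Your proposal is correct and coincides with the paper's own route: the corollary is stated as a direct specialization of Proposition~\ref{Prop:Error} once Proposition~\ref{lem:framework} supplies assumptions \eqref{en:A1}--\eqref{en:A5} in the concrete spaces, with the only translation steps being the equivalence of \eqref{intr-stab-abg2} and \eqref{intr-stab-abg3} and the identification of the smallness clause on $\tilde\lambda$ with $\tilde\lambda\le\tilde\lambda_0$. Nothing further is needed.
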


%
%

\section{Proofs of Propositions \ref{lem:framework}, \ref{lem:framework3}, and \ref{lem:framework4}}
\label{Sec:ProofProposition}

In this section we show that our abstract framework is applicable in the cases
of Examples \ref{Exmp1}--\ref{Exmp4}.

\subsection{Proof of Proposition \ref{lem:framework}}
In this subsection we show that our abstract framework is applicable in the cases
of Examples \ref{Exmp1} and \ref{Exmp2}; more precisely, we verify our abstract
conditions \eqref{en:A1}--\eqref{en:A5} for the initial and boundary value problems
\eqref{eq:Exmp1} and \eqref{eq:Exmp2} in the spaces given in Proposition \ref{lem:framework}.

It follows easily from the smoothness of the diffusion coefficients  and the positivity of 
the coefficient $a(x,t)$ that assumption \eqref{en:A3} is satisfied; the smallest 
possible value of the stability constant $\lambda$ in \eqref{bounded-a} is
as on the left-hand side of \eqref{intr-stab-abg2}.
Assumption \eqref{en:A4} is satisfied, if the functions $f$ and $\bm{g}$ are locally Lipschitz 
continuous with respect to the arguments $u$ and $\nabla u$. 
The proof of \eqref{en:A5}, under the condition $2/p+d/q<1,$ is given in \cite[Section 9]{ALL}.
Thus, it remains to verify assumptions \eqref{en:A1} and \eqref{en:A2}. 

Existence and uniqueness of a solution $v\in H^1(\R_+;V')\cap L^2(\R_+;V)
\hookrightarrow C([0,\infty);H)$ for \eqref{HomoEq} 
are simple consequences of  the positivity of $a(x,t)$ and the boundedness of 
$a(x,t)$ and $b(x,t)$. 
It is easy to see that the operator $-A_H(s):D_H\rightarrow H,$ 
\[-A_H(s)v=-\nabla\cdot\big((a(x,s)+\i\, b(x,s)) \nabla v \big),\]
is densely defined, closed and invertible.  Following Section \ref{Sec:Angle}, 
one can prove that the operator $-A_H(s)$ generates a bounded analytic semigroup 
in the sector $\varSigma_{\theta_s}$, with 
\[\theta_s\ge  \inf_{x\in\varOmega} \arctan\frac{a(x,s)}{|b(x,s)|} 
\ge
\inf_{(x,t)\in\varOmega\times(0,T)} \arctan\frac{a(x,t)}{|b(x,t)|}  
=\arcsin \frac{1}{\lambda} .\] 
In fact, we have the sharper result 
\begin{equation}\label{AngleE}
\theta_s=  \inf_{x\in\varOmega} \arctan\frac{a(x,s)}{|b(x,s)|}  ,
\end{equation}
which immediately implies 
%
$\theta:=  \inf_{s\in [0,T]} \theta_s =\arcsin \frac{1}{\lambda} $.
%
To prove \eqref{AngleE}, simply note that otherwise we could choose  $\varphi$ 
satisfying
\begin{equation}\label{AngleL}
\theta_s>\varphi>\arctan\frac{a(x_0,s)}{|b(x_0,s)|} 
\quad\mbox{for some \,$x_0\in\varOmega$\, such that \,$b(x_0,s)\ne 0$} .
\end{equation}
Without loss of generality, we can assume that $b(x_0,s)> 0.$ Then the operator
\[-e^{\i\varphi} A_H(s)=\nabla\cdot\big(\big(
a(x,s)\cos\varphi-b(x,s)\sin\varphi+\i\big[a(x,s)\sin\varphi+b(x,s)\cos\varphi\big]\big)\nabla \big)\]
would generate a bounded analytic semigroup. 
But this is impossible, because \eqref{AngleL} 
implies that 
$a(x,s)\cos \varphi-b(x,s)\sin\varphi<0$ at some point $x\in\varOmega$, 
which means that the operator $-e^{\i\varphi}A_H(s)$ would have 
eigenvalues with positive real parts, 
thus the resolvent operator $z+e^{\i\varphi}A_H(s)$ would not be invertible 
for some $z$ on the right half-plane, which contradicts the bounded analyticity of the 
semigroup generated by $-e^{\i\varphi}A_H(s)$ (see, e.g., \cite[Theorem 3.7.11]{ABHN}). 
This proves \eqref{en:A1} for both Problems \eqref{eq:Exmp1} and \eqref{eq:Exmp2}, with \eqref{AngleE}. 

Next, we verify \eqref{en:A2} for Problem \eqref{eq:Exmp2}. 
In this case, $X=L^q(\varOmega)$. 
With the positivity of $a(x,t)$ and the H\"older continuity of  $a(x,t)$ and $b(x,t)$, 
Auscher, McIntosh and Tchamitchian \cite[Theorem 4.19]{AIT98}  have proved that the kernel 
of the semigroup $E_s^H(t)$ has a Gaussian upper bound: 
\begin{equation} 
|G_s(t,x,y)|\leq \frac{C_0}{t^{d/2}}e^{-\frac{|x-y|^2}{C_0t}} ,\quad
\forall\, t>0,\,\,\forall\, x,y\in\varOmega , 
\end{equation}
where the constant $C_0$ depends only on  the lower bound of $a(x,t),$ on the upper bounds of 
$|a(x,t)+\i\, b(x,t)|,$  and on the H\"older norms of $a(\cdot,t)$ and $b(\cdot,t)$, 
but is independent of $s\in[0,T]$. 
Similarly, when $\varphi\in(0,\theta)$ the semigroup generated by $-e^{\i\varphi}A_H(s)$ 
has also Gaussian upper bound. Hence, \cite[Theorem 8.5]{KW} implies that the semigroup generated 
by $-e^{\i\varphi}A_H(s)$ satisfies the conditions of \cite[Theorem 8.6]{KW}, for all $\varphi\in(0,\theta)$, 
which further implies that $\{E_s^H(t)\}_{t>0}$ extends to an $R$-bounded 
analytic semigroup $\{E_s^X(z)\}_{z\in\varSigma_{\varphi}}$ on $X$ (in view of 
\cite[Remark 8.23]{KW} and \cite[Theorem 4.2]{Weis2}). 
If we denote the generator of  $\{E_s^X(t)\}_{t>0}$ by $-A_X(s)$, then \cite[Theorem 4.2]{Weis2} 
implies that the family of operators $\{z(z+A_X(s))^{-1} : z\in\varSigma_\varphi\}$ is $R$-bounded. 
As a consequence,  \cite[Theorems 4.1--4.2 and Remark 4.3]{KLL} implies that, when 
$\theta>\pi/2-\alpha_k,$ the solution of \eqref{lin-eq-Av-bdf} 
satisfies (with $v_{n}=0$ for $n<0$)
\begin{equation}\label{MaxRegLq}
\begin{aligned}
&\frac{1}{\tau}\Big\|\Big(\sum_{j=0}^k \delta_j v_{n-j}\Big)_{n=0}^N\Big\|_{ L^p(L^q(\varOmega))} 
+ \big\|(v_n )_{n=k}^N\big\|_{ L^p(W^{2,q}(\varOmega))} \\
&\le C\Big(
\big\|(f_n)_{n=k}^N\big\|_{ L^p(L^q(\varOmega))} 
+\frac{1}{\tau}\big\|(v_i)_{i=0}^{k-1}\big\|_{ L^p(L^q(\varOmega))} 
+ \big\|(v_i )_{i=0}^{k-1}\big\|_{ L^p(W^{2,q}(\varOmega))} \Big) .
\end{aligned}
\end{equation}
Then \eqref{en:A2} follows from \eqref{MaxRegLq} and the inequality  
\[\frac{1}{\tau}\big\|\big(v_n-v_{n-1}\big)_{n=0}^N\big\|_{ L^p(L^q(\varOmega))} \le \frac{C}{\tau}
\Big\|\Big(\sum_{j=0}^k \delta_j v_{n-j}\Big)_{n=0}^N\Big\|_{ L^p(L^q(\varOmega))} .\]
To prove the last inequality, let $\dot v_{n}:=\frac{1}{\tau}\sum_{j=0}^k \delta_j v_{n-j}$. 
Recall that $\delta(\zeta)=(1-\zeta)\mu(\zeta)$, where the polynomial $\mu(\zeta)$ of degree 
$k-1$ has no zeros in the closed unit disc; 
therefore
\[\frac1{\mu(\zeta)} = \sum_{n=0}^\infty \chi_n \,\zeta^n, \qquad\text{with}\,\,\,
|\chi_n|\le C\rho^n \,\,\, \text{for some}\,\,\,\rho<1.\]
Then, with $\eta_{\ell}=1$ for $\ell\ge 0$ and $\eta_{\ell}=0$ for $\ell< 0,$
 we have 
\[\frac{v_n-v_{n-1}}\tau 
= \sum_{m=0}^n \dot v_{n-m} \, \chi_m
= \sum_{m=0}^N \eta_{n-m} \dot v_{n-m} \, \chi_m ,\]
%
because both sides 
have the same generating function.  
Therefore,
\begin{align*}
\frac{1}{\tau}\Big\|(v_n-v_{n-1})_{n=0}^N\Big\|_{ L^p(X)} 
&\le \sum_{m=0}^N |\chi_m|\, \|(\eta_{n-m}\dot v_{n-m})_{n=0}^N \|_{ L^p(X)} \\ 
&\le \sum_{m=0}^N |\chi_m|\, \|(\dot v_{n})_{n=0}^N \|_{ L^p(X)} \\ 
&\le C\big\|(\dot v_n )_{n=0}^N\big\|_{ L^p(X)}.
\end{align*}

Finally, we verify \eqref{en:A2} for Problem \eqref{eq:Exmp1}. In this case, we want 
to derive from \eqref{MaxRegLq}  the following estimate:
\begin{equation}\label{MaxRegW-1q}
\begin{aligned}
&\frac{1}{\tau}\Big\|\Big(\sum_{j=0}^k \delta_j v_{n-j}\Big)_{n=k}^N\Big\|_{ L^p(W^{-1,q}(\varOmega))} 
+ \big\|(v_n )_{n=k}^N\big\|_{ L^p(W^{1,q}(\varOmega))}\le \\
& C\Big(
\big\|(f_n)_{n=k}^N\big\|_{ L^p(W^{-1,q}(\varOmega))} 
+\frac{1}{\tau}\big\|(v_i)_{i=0}^{k-1}\big\|_{ L^p(W^{-1,q}(\varOmega))} 
+ \big\|(v_i )_{i=0}^{k-1}\big\|_{ L^p(W^{1,q}(\varOmega))} \Big) .
\end{aligned}
\end{equation}
In fact, \cite[Theorem 1]{AQ02} implies that the Riesz transform $\nabla A_X(s)^{-1/2}$ 
is bounded on $L^q(\varOmega)$ for all $1<q<\infty$. 
Once we have \eqref{MaxRegLq} and the boundedness 
of the Riesz transform, the discrete maximal regularity \eqref{BDF_MaxReg} 
can be proved in the same way as \cite[Proposition 8.7]{ALL}. 
\qed


\subsection{Proof of Proposition \ref{lem:framework3}}

Since the operator $-(-\varDelta)^{1/2}$ is self-adjoint and non-positive definite, 
it generates a bounded analytic semigroup of angle $\pi/2$ on the Hilbert space $H=L^2(\R^d)$, 
with domain $D_H=H^1(\R^d)$. This implies \eqref{en:A1}.

Assumption \eqref{en:A3} is due to the time-independence and self-adjointness of 
the operator $(-\varDelta)^{1/2}$. The self-adjointness also implies $\lambda=1$. 

If $w,v$ are bounded in $W=L^2(\R^d)\cap L^\infty(\R^d)\hookrightarrow L^q(\R^d) $, 
with $\|w\|_{W}+\|v\|_{W}\le K$, then the local Lipschitz continuity of $f$ implies, for $q\ge 2$,  
\begin{align*}
\|f(w)-f(v)\|_{X} 
&=\|f(w)-f(v)\|_{L^2(\R^d)}
+\|f(w)-f(v)\|_{L^q(\R^d)}\\
&\le C_K\big (\|w-v\|_{L^2(\R^d)} +\|w-v\|_{L^q(\R^d)}\big )\\
&\le C_K \Big (\|w-v\|_{L^2(\R^d)} +\|w-v\|_{L^2(\R^d)}^{\frac{2}{q}}
\|w-v\|_{L^\infty(\R^d)}^{1-\frac{2}{q}}\Big )\\
&\le C_K \big (\|w-v\|_{L^2(\R^d)} +\|w-v\|_{L^\infty(\R^d)}\big )
= C_K \|w-v\|_{W} .
\end{align*}
This implies assumption \eqref{en:A4} in the case $q\ge 2$, 
with $\tilde\lambda=0$.

If $q>d$, then \cite[Theorems 5.2 and 5.9]{Adams} implies that, for any positive $\varepsilon$, there 
exists positive $C_\varepsilon$ such that
\[\|v\|_{L^q(\R^d)}+\|v\|_{L^\infty(\R^d)}  
\le \varepsilon \|v\|_{W^{1,q}(\R^d)}
+C_\varepsilon \|v\|_{L^q(\R^d)} 
\quad\forall\, v\in W^{1,q}(\R^d) .\]
For $D=H^1(\R^d)\cap W^{1,q}(\R^d)$ 
and $X=L^2(\R^d)\cap L^q(\R^d)$, 
we have 
\[W^{1,p}(0,T;X)\cap L^p(0,T;D)
\hookrightarrow L^\infty(0,T;X)
\hookrightarrow L^\infty(0,T;L^2(\R^d)),
\quad\text{when $p>1$}, \]
and 
\begin{equation*}
\begin{aligned}
&W^{1,p}(0,T;X)\cap L^p(0,T;D)  
\hookrightarrow 
W^{1,p}(0,T;L^q(\R^d))\cap L^p(0,T;W^{1,q}(\R^d))  \\
&\hookrightarrow 
L^\infty(0,T;(L^q(\R^d),W^{1,q}(\R^d))_{1-1/p,p})
\qquad\quad \text{see \cite[Proposition 1.2.10]{Lunardi95}}\\
&=
L^\infty(0,T;B^{1-1/p;q,p}(\R^d)) 
\quad\,\,\, \text{by the definition of Besov spaces 
\cite[\textsection 7.32]{Adams}} \\
&\hookrightarrow 
L^\infty(0,T;L^{\infty}(\R^d)), 
\qquad\quad\, \text{when $1/p+d/q<1$, see \cite[\textsection 7.34]{Adams}} .
\end{aligned}
\end{equation*} 
The last two imbedding results imply 
\[W^{1,p}(0,T;X)\cap L^p(0,T;D)
\hookrightarrow L^\infty(0,T;L^2(\R^d)\cap L^\infty(\R^d))
=L^\infty(0,T;W) . \]
This proves assumption \eqref{en:A5} in the case $q>d$.

It is well known that the operator $-(-\varDelta)^{1/2}$ generates a bounded analytic 
semigroup $\{E^H(z)\}_{z\in\varSigma_{\pi/2}}$ with the kernel
\begin{equation}
E(z,x,y)=\frac{c_d}{z^d}\bigg(1+\frac{|x-y|^2}{z^2}\bigg)^{-\frac{d+1}{2}} \, ,
\end{equation}
where $c_d$ is a positive constant; see \cite[\textsection 1.1.3]{GHL14}. 
It is easy to check that the kernel $E(z,x,y)$ satisfies the condition of 
\cite[Proposition 2.9-(b)]{BK02} with $g(s)=C(1+s^2)^{-\frac{d+1}{2}}$. 
Hence, \cite[Proposition 2.9-(a), with $p=1$ and $q=\infty$ therein]{BK02} is satisfied. 
Substituting $s=2, p=1, q=\infty,  p_o=q\in(1,\infty)$ and $\varOmega=\R^d$ into 
\cite[Corollary 2.7]{BK02}, we see that the analytic semigroup generated by the 
operator $-(-\varDelta)^{1/2}$ is $R$-bounded on $L^q(\R^d)$ in the sector 
$\varSigma_\varphi$, for any $\varphi\in(0,\pi/2)$. Equivalently, in view of 
\cite[Theorem 4.2]{Weis2}, the family of operators 
$\{z(z+(-\varDelta)^{\frac12})^{-1}:z\in \varSigma_{\varphi+\pi/2}\}$ is $R$-bounded on 
$L^q(\R^d)$, for any $1<q<\infty$ and $\varphi\in(0,\pi/2)$. 
Then \cite[Theorems 4.1--4.2 and Remark 4.3]{KLL} implies that, for BDF methods up to order 6, 
\begin{equation}\label{MaxLp_frac_L2}
\begin{aligned}
&\frac{1}{\tau}\big\|(v_n-v_{n-1} )_{n=k}^N\big\|_{ L^p(L^2(\R^d))} 
+ \big\|(v_n )_{n=k}^N\big\|_{ L^p(H^1(\R^d))} \\
&\le C\Big(
\big\|(f_n)_{n=k}^N\big\|_{ L^p(L^2(\R^d))} 
+\frac{1}{\tau}\big\|(v_i)_{i=0}^{k-1}\big\|_{ L^p(L^2(\R^d))} 
+ \big\|(v_i )_{i=0}^{k-1}\big\|_{ L^p(H^1(\R^d))} \Big) . 
\end{aligned}
\end{equation}
and
\begin{equation}\label{MaxLp_frac_Lq}
\begin{aligned}
&\frac{1}{\tau}\big\|(v_n-v_{n-1} )_{n=k}^N\big\|_{ L^p(L^q(\R^d))} 
+ \big\|(v_n )_{n=k}^N\big\|_{ L^p(W^{1,q}(\R^d))} \\
&\le C\Big(
\big\|(f_n)_{n=k}^N\big\|_{ L^p(L^q(\R^d))} 
+\frac{1}{\tau}\big\|(v_i)_{i=0}^{k-1}\big\|_{ L^p(L^q(\R^d))} 
+ \big\|(v_i )_{i=0}^{k-1}\big\|_{ L^p(W^{1,q}(\R^d))} \Big) . 
\end{aligned}
\end{equation}
Estimates \eqref{MaxLp_frac_L2} and \eqref{MaxLp_frac_Lq} imply 
\eqref{en:A2}.

Overall, assumptions \eqref{en:A1}--\eqref{en:A5} are satisfied for  
$q\in(d,\infty)\cap [2,\infty)$.


\subsection{Proof of Proposition \ref{lem:framework4}}

Similarly, the operator $-\varDelta^2$ is self-adjoint and non-positive definite. Hence, it generates 
a bounded analytic semigroup of angle $\pi/2$ on the Hilbert space $H=L^2(\R^d)$, with 
domain $D_H=H^4(\R^d)$. This implies \eqref{en:A1}. 

Assumption \eqref{en:A3} is due to the time-independence and self-adjointness of the operator 
$\varDelta^2$. The self-adjointness also implies $\lambda=1$.

Since $W=H^2(\R^d)\cap W^{2,\infty}(\R^d)\hookrightarrow W^{2,q}(\R^d)$ for $q\ge 2$, 
if $w,v$ are bounded in $W$ with $\|w\|_{W}+\|v\|_{W}\le K$, 
then the local Lipschitz continuity of $f$ implies 
\begin{align*}
\|\varDelta f(w)-\varDelta f(v)\|_{X}
&=\|\varDelta f(w)-\varDelta f(v)\|_{L^2(\R^d)}
+\|\varDelta f(w)-\varDelta f(v)\|_{L^q(\R^d)} \\
&\le C_K(\|w-v\|_{H^2(\R^d)} +\|w-v\|_{W^{2,q}(\R^d)} ) \\
&\le C_K(\|w-v\|_{H^2(\R^d)} +\|w-v\|_{W^{2,\infty}(\R^d)}) 
= C_K\|w-v\|_{W} .
\end{align*}
This implies assumption \eqref{en:A4} in the case $q\ge 2$, with $\tilde\lambda=0$.

If $1<q<\infty$ and $2q>d$, then \cite[Theorems 5.2 and 5.9]{Adams} implies that, for 
any $\varepsilon>0$, there exists $C_\varepsilon>0$ satisfying 
\begin{align*}
&\|v\|_{H^2(\R^d)}  
\le \varepsilon \|v\|_{H^4(\R^d)}
+C_\varepsilon \|v\|_{L^2(\R^d)} 
&&\forall\, v\in H^{4}(\R^d) ,\\
&\|v\|_{W^{2,\infty}(\R^d)}  
\le \varepsilon \|v\|_{W^{4,q}(\R^d)}
+C_\varepsilon \|v\|_{L^q(\R^d)} 
&&\forall\, v\in W^{4,q}(\R^d) . 
\end{align*}
Since $D=H^{4}(\R^d)\cap W^{4,q}(\R^d)$ and 
$X=L^2(\R^d)\cap L^q(\R^d)$, the last two inequalities imply 
\eqref{Compact_interp} in assumption \eqref{en:A5}. 
Moreover, we have
\begin{equation*}
\begin{aligned}
&W^{1,p}(0,T;X)\cap L^p(0,T;D)  
\hookrightarrow 
W^{1,p}(0,T;L^2(\R^d))\cap L^p(0,T;H^4(\R^d))  \\
&\hookrightarrow 
W^{1-\theta,p}(0,T;H^{4\theta}(\R^d))
\qquad  \text{by using complex interpolation}\\
&\hookrightarrow 
L^\infty(0,T;H^2(\R^d)), 
\qquad\quad\,\, \text{when $(1-\theta)p>1$ and $4\theta>2$} .
\end{aligned}
\end{equation*} 
and 
\begin{equation*}
\begin{aligned}
&W^{1,p}(0,T;X)\cap L^p(0,T;D)  
\hookrightarrow 
W^{1,p}(0,T;L^q(\R^d))\cap L^p(0,T;W^{4,q}(\R^d))  \\
&\hookrightarrow 
W^{1-\theta,p}(0,T;W^{4\theta,q}(\R^d))
\quad \text{by using complex interpolation}\\
&\hookrightarrow 
L^\infty(0,T; W^{2,\infty}(\R^d)),
\qquad \text{when $(1-\theta)p>1$ and $(4\theta-2)q>d$.} 
\end{aligned}
\end{equation*} 
It remains to prove the existence of a $\theta$ satisfying the conditions above. 
In fact, if $1<p,q<\infty$ and $d/q+4/p<2$, then 
$\frac{1}{4}\big(2+\frac{d}{q}\big)<1-\frac{1}{p}$ and 
$\frac{1}{2}<1-\frac{1}{p}$. 
Hence, there exists $\theta\in(0,1)$ satisfying 
\[\frac{1}{4}\bigg(2+\frac{d}{q}\bigg)<\theta<1-\frac{1}{p}
\quad\text{and}\quad \frac{1}{2}<\theta<1-\frac{1}{p} .\]
Then $\theta$ satisfies $(1-\theta)p>1$, $4\theta>2$ and $(4\theta-2)q>d$. 
This proves \eqref{en:A5} in the case $q>\max(d/2,1)$. 

Now, according to \cite[Example 3.2 (A)]{HP97}, the semigroup generated by 
$-e^{\i\varphi}\varDelta^2$ satisfies a Gaussian estimate 
\[|E(te^{\i\varphi},x,y)|
\le \frac{C_\varphi}{t^{d/4}}\exp\bigg({-\frac{|x-y|^{4/3}}{C_\varphi\, t^{1/3}}}\bigg)\]
for any $\varphi\in(0,\pi/2)$. 
Substituting $s=2, p=1, q=\infty, p_o=q\in(1,\infty)$ and $\varOmega=\R^d$ into 
\cite[Corollary 2.7]{BK02}, we see that the analytic semigroup generated by the 
operator $-\varDelta^2$ is $R$-bounded on $L^q(\R^d)$, $1<q<\infty$, in the 
sector $\varSigma_\varphi$, for any $\varphi\in(0,\pi/2)$. 
Equivalently, in view of \cite[Theorems 4.2]{Weis2}, the family of operators 
$\{z(z+\varDelta^2)^{-1}:z\in \varSigma_{\varphi+\pi/2}\}$ is $R$-bounded on 
$L^q(\R^d)$, for any $1<q<\infty$ and $\varphi\in(0,\pi/2)$.
Then \cite[Theorems 4.1--4.2 and Remark 4.3]{KLL} implies that, for BDF methods 
of order up to 6, 
\begin{equation}\label{MaxLp_frac_L22}
\begin{aligned}
&\frac{1}{\tau}\big\|(v_n-v_{n-1} )_{n=k}^N\big\|_{ L^p(L^2(\R^d))} 
+ \big\|(v_n )_{n=k}^N\big\|_{ L^p(H^4(\R^d))} \\
&\le C\Big(
\big\|(f_n)_{n=k}^N\big\|_{ L^p(L^2(\R^d))} 
+\frac{1}{\tau}\big\|(v_i)_{i=0}^{k-1}\big\|_{ L^p(L^2(\R^d))} 
+ \big\|(v_i )_{i=0}^{k-1}\big\|_{ L^p(H^4(\R^d))} \Big) . 
\end{aligned}
\end{equation}
and
\begin{equation}\label{MaxLp_frac_Lq2}
\begin{aligned}
&\frac{1}{\tau}\big\|(v_n-v_{n-1} )_{n=k}^N\big\|_{ L^p(L^q(\R^d))} 
+ \big\|(v_n )_{n=k}^N\big\|_{ L^p(W^{4,q}(\R^d))} \\
&\le C\Big(
\big\|(f_n)_{n=k}^N\big\|_{ L^p(L^q(\R^d))} 
+\frac{1}{\tau}\big\|(v_i)_{i=0}^{k-1}\big\|_{ L^p(L^q(\R^d))} 
+ \big\|(v_i )_{i=0}^{k-1}\big\|_{ L^p(W^{4,q}(\R^d))} \Big) . 
\end{aligned}
\end{equation}
Estimates \eqref{MaxLp_frac_L22} and \eqref{MaxLp_frac_Lq2} imply 
\eqref{en:A2}.

Overall, assumptions \eqref{en:A1}--\eqref{en:A5} are satisfied for  
$q\in(d/2,\infty)\cap [2,\infty)$.

\subsection*{Acknowledgment}
The authors are grateful to Professor Christian
Lubich for stimulating discussions. 
The research stay of Buyang Li at Universit\"at T\" ubingen
was funded by the Alexander von Humboldt Foundation.

\bibliographystyle{amsplain}

\end{document}